\documentclass[12pt,english]{amsart}
\usepackage[a4paper]{geometry}
\usepackage{amssymb,amsmath}
\usepackage{amsthm}
\usepackage{enumerate}
\usepackage[utf8]{inputenc}
\usepackage[english]{babel}
\usepackage{xcolor}
\usepackage{url}
\usepackage{graphicx}
\usepackage{tikz}
\usetikzlibrary{matrix,arrows,shapes.geometric}

\usepackage{csquotes}
\usepackage[backend=biber,style=numeric,url=false,isbn=false,doi=false,giveninits=true,hyperref=true,backref=true,]{biblatex}
\usepackage[bookmarks=true,hyperindex,pdftex,colorlinks,citecolor=red, linkcolor=blue]{hyperref}

\usepackage{babel}

\numberwithin{equation}{section}

\usepackage{tikz-cd}
\theoremstyle{plain}
\newtheorem{theorem}{Theorem}[section]

\newtheorem{proposition}[theorem]{Proposition}

\newtheorem{corollary}[theorem]{Corollary}

\theoremstyle{definition}
\newtheorem{definition}[theorem]{Definition}
\newtheorem{remark}[theorem]{Remark}
\newtheorem{example}[theorem]{Example}
\newtheorem{examples}[theorem]{Examples}

\newtheorem{problem}[theorem]{Problem}

\newcommand{\N}{\mathbb{N}}
\newcommand{\R}{\mathbb{R}}

\def\NN{\mathbb N}
\def\IN{\hbox{{\rm I}\kern-.13em{\rm N}}}
\def\ZZ{\Bbb Z}
\def\RR{\mathbb{R}}

\def\IR{\hbox{{\rm I}\kern-.13em{\rm R}}}

\def\aa{\alpha}

\def\dd{\delta}

\def\aim{\text{targeting}{}}

\renewcommand{\bar}{\overline}
\renewcommand{\tilde}{\widetilde}

\newcommand{\spa}{\operatorname{span}}

\makeatletter
\renewcommand{\tocsection}[3]{%
	\indentlabel{\@ifnotempty{#2}{\bfseries\ignorespaces#1 #2\quad}}\bfseries#3}
\renewcommand{\tocsubsection}[3]{%
	\indentlabel{\@ifnotempty{#2}{\ignorespaces#1 #2\quad}}#3}

\newcommand\@dotsep{4.5}
\def\@tocline#1#2#3#4#5#6#7{\relax
	\ifnum #1>\c@tocdepth 
	\else
	\par \addpenalty\@secpenalty\addvspace{#2}%
	\begingroup \hyphenpenalty\@M
	\@ifempty{#4}{%
		\@tempdima\csname r@tocindent\number#1\endcsname\relax
	}{%
		\@tempdima#4\relax
	}%
	\parindent\z@ \leftskip#3\relax \advance\leftskip\@tempdima\relax
	\rightskip\@pnumwidth plus1em \parfillskip-\@pnumwidth
	#5\leavevmode\hskip-\@tempdima{#6}\nobreak
	\leaders\hbox{$\m@th\mkern \@dotsep mu\hbox{.}\mkern \@dotsep mu$}\hfill
	\nobreak
	\hbox to\@pnumwidth{\@tocpagenum{\ifnum#1=1\bfseries\fi#7}}\par
	\nobreak
	\endgroup
	\fi}
\AtBeginDocument{%
	\expandafter\renewcommand\csname r@tocindent0\endcsname{0pt}
}
\def\l@subsection{\@tocline{2}{0pt}{2.5pc}{5pc}{}}
\makeatother

\def\pare{\text{par}}
\def\chil{\text{Chi}}
\newcommand{\proj}{\operatorname{proj}}
\newcommand{\ro}[2]{\varrho_#1^#2}
\newcommand{\abs}[1]{\left\lvert#1\right\rvert}

\newcommand{\ntres}[1]{{\left\vert\kern-0.25ex\left\vert\kern-0.25ex\left\vert #1 
		\right\vert\kern-0.25ex\right\vert\kern-0.25ex\right\vert}} 

\newcommand{\ndos}[1]{{\left\vert\kern-0.25ex\left\vert\kern-0.25ex #1 
		\kern-0.25ex\right\vert\kern-0.25ex\right\vert}} 

\newcommand{\free}[1]{\mathcal{F}(#1)} 

\def\lip{ \text{Lip}}

\def\sp{\hbox{{\rm span}}}

\begin{document}

\title{Hypercyclicity and
Lipschitz-free operators}

\author{Christian Cobollo, Romuald Ernst, Quentin Menet and Alfred Peris}

\address{Christian Cobollo,
Institut Universitari de Matemàtica Pura i Aplicada, Universitat Politècnica de València, Camí Vera S/N, 46022 València, SPAIN}
\email{chcogo@upv.es}

\address{Romuald Ernst,
Université du Littoral Côte d’Opale,  Laboratoire de Mathématiques Pures et Appliquées Joseph Liouville,  62228 Calais, FRANCE}
\email{romuald.ernst@math.cnrs.fr}

\address{Quentin Menet, Département de Mathématique, Université de Mons, 20 Place du Parc, 7000 Mons, BELGIUM}
\email{quentin.menet@umons.ac.be}

\address{Alfred Peris,
Institut Universitari de Matemàtica Pura i Aplicada, Universitat Politècnica de València, Camí Vera S/N, 46022 València, SPAIN}
\email{aperis@upv.es}

\keywords{Hypercyclic operators, shifts on trees, Lipschitz-free spaces, Linear Dynamics, Dynamical Systems}

\subjclass[2020]{47A16; 46B20; 37B05}

\begin{abstract}

We study linearizations of dynamical systems and some of its topological properties. Special attention is paid to the case of Lipschitz-free operators, and they are shown to model the dynamics of very general linearizations. We provide a new criterion, called the targeting property, for the (weakly) mixing property in a linear dynamical system through a (possibly) non-linear restriction of it. We then apply this criterion  to  the  linearization $T_f$ of a Lipschitz map $f\colon M\to M$, where $M$ is a metric space and the  operator $T_f$ is defined on the corresponding Lipschitz-free space $\free{M}$---the so-called Lipschitz-free operators. We show that, under some natural assumptions on the distance considered in $M$, the operator $T_f$ is hypercyclic if and only if the map $f$ has the targeting property.

\end{abstract}

\maketitle

\tableofcontents

\section{Introduction}\label{intro}

Through this document we will contribute to the study of the relation between a non-linear dynamical system and a corresponding linear dynamical system, understood as its \textit{linearization}. The interest of doing this relies on the possibility for a given linear operator $T$ satisfying certain conditions to deduce a behaviour on the whole space from the specific local behaviour on a (possibly non-linear) restriction on the original (linear) dynamical system.

Regarding the relation between linear and non-linear dynamical systems, Feldman~\cite{Feldman01} showed in 2001 that there exists a universal operator $T$ on the Hilbert space such that, for any continuous function $f$ on a compact metric space, there is a $T$-invariant compact subset of the Hilbert space so that the restriction of $T$ to this invariant set is topologically conjugate to $f$. Probably, this was the first attempt to connect in a precise way linear and non-linear dynamics, although we should also mention the work of Protopopescu \cite{Prot90} on (global) Carleman linearizations. Feldman's approach provides an operator that captures every observable dynamical behaviour on compact metric spaces.

It would be interesting to have a more precise connection between a continuous function and a related linear operator. In \cite{MP15}, it was shown that a close connection between non-linear and linear dynamics can be established, as soon as the map $f\colon K\to K$ fixes the point $0$ and the linear span of $K$ is dense in the topological vector space $X$ where we then extend $f$ to an operator $T$. More precisely, topologically mixing properties and Devaney chaos were shown to be ``extendable'' under very general assumptions, and this has been illustrated with examples including the Protopopescu approach with Carleman linearization, Feldman universal operator, and dynamics of what we will refer to as \textit{Lipschitz-free operators}\footnote{This class of operators are also referred to as \textit{Lipschitz operators} by many other authors. However, it looks like the notation differs between different sources in the literature, as sometimes is used for the linearization $T_f$, but some times to a map $f\in \lip_0(X,Y)$ between Banach spaces $X$ and $Y$, 
 and even to denote some other kind of operators outside the context of Lipschitz-free spaces (see e.g. \cite{Rob89} and \cite{wang12}). For the sake of clarity, we opted to use the term \textit{Lipschitz-free operator}.}, that is, linearization operators $T_f\colon \free{M}\to \free{N}$ of non-linear Lipschitz mappings $f\colon M\to N$, defined over the corresponding Lipschitz-free spaces. The latter was further developed in \cite{ACP21} and has been the subject of several papers. For instance, the set of recurrent vectors of such operators has been investigated in \cite{Tap24}, and the first and the last author \cite{CP25} recently studied disjoint transitivity and its generalizations associated with Furstenberg families, with emphasis and examples on extensions of maps to Lipschitz-free spaces. 

Specifically, this paper aims to introduce new criteria for non-linear maps so that, when linearized, the associated operator is hypercyclic and even weakly mixing. Our main tool to produce such linearizations will consist in considering the Lipschitz-free operators $T_f$ of non-linear lipschitz maps $f$. With this, in particular, we will contribute to extend the knowledge on the existence of dense orbits for this kind of operators. 

In \cite{ACP21}, the authors introduced the Hypercyclicity Criterion for Lipschitz-free operators and showed that if a Lipschitz map $f$ satisfies this criterion then the associated operator $T_f$ is hypercyclic and even weakly mixing. However this criterion is too demanding. As remarked in \cite{ACP21}, if we consider the compact space $M=\{0\}\cup\{\frac{1}{n}:n\in \mathbb{N}\}$ endowed with the usual distance in $\mathbb{R}$ and the Lipschitz map $f$ on $M$ given by $f(0)=0$, $f(1)=1$ and $f(\frac{1}{n})=\frac{1}{n-1}$ for $n\ge 2$, then $f$ does not satisfy the Hypercyclicity Criterion for Lipschitz-free operators while $T_f$ is hypercyclic and even weakly mixing. 

In parallel to this study of the dynamics of Lipschitz-free operators, the dynamics of weighted shifts on trees has also begun to be explored \cite{Ma17,AA25, LP25, GePapreprint, AC26}, and a characterization of hypercyclicity for these operators has even been achieved in \cite{GEPa23}. We will demonstrate later that weighted shifts on trees are, in fact, a special case of Lipschitz-free operators, and the knowledge gained about these operators can thus be used to unveil new criteria for hypercyclicity within the broader context of Lipschitz-free operators.

This is how we introduce in this paper the targeting property that will allow us to characterize the hypercyclicity of Lipschitz-free operators under certain conditions on the considered metric space. This characterization will apply to Lipschitz-free operators corresponding to weighted shifts on trees, but also to all Lipschitz-free operators defined on $M=\{0\}\cup\{\frac{1}{n}:n\in \mathbb{N}\}$. In particular, this will allow us to recover the hypercyclicity of the operator $T_f$ mentioned earlier through our new criterion.

The document is organized as follows: In Section 2 we first introduce general linearizations of discrete dynamical systems and present the required preliminaries on Lipschitz-free spaces and Lipschitz-free operators $T_f$. Then, a general result is established showing that Lipschitz-free operators constitute a universal model among linearizations of Lipschitz maps on metric spaces. We finally relate this kind of operators with the recently studied weighted backward shifts on sequence spaces defined over countable directed trees (see \cite{GEPa23}, etc), showing in particular that any weighted backward shift operator $B_\lambda$ on the real space $\ell_1(V)$ can be identified with a $T_f$ operator over a certain $\free{M_V}$.

Section 3 is motivated by the Hypercyclicity Criterion for Lipschitz-free operators stated in \cite{ACP21}. We introduce a blow-up/collapse criterion which serves as a non-linear version of the linear blow-up/collapse criterion to achieve the weak mixing property. In contrast, we show that there exist examples of linearizations that are weakly mixing but do not satisfy those criteria.

In Section 4, based on the characterization of hypercyclic weighted shifts on trees obtained in  \cite{GEPa23}, we introduce what we call the \textbf{\aim{} property} around a fixed point, a strictly weaker condition than the previously defined blow-up/collapse property.  Assuming the {\aim{} property} we obtain weakly mixing linearizations. This condition also characterizes hypercyclicity and the weak  mixing property on $T_f$ operators over Lipschitz-free spaces admitting a Schauder basis formed by deltas, or more generally on $T_f$ operators based on a metric space $M$ where points far from zero are well separated, such as $\{0\}\cup\{\frac{1}{n}:n\in \mathbb{N}\}$.

\section{Linearizations, Lipschitz-free operators and shifts on trees}

Our main  interest is on Lipschitz-free linearizations, but in many cases there are natural linearizations which are relevant. The first point is to set the general ground that will allow us to precise the linearization considered in each case. 

\begin{definition}
    Given a continuous map $f\colon W\to W$ on a topological space $W$ with a fixed point $0_W$, a continuous injection $j:W\to X$ in a topological vector space $X$, and an operator $T:X\to X$, we say that $(X,T)$ is a \emph{linearization} of $(W,f)$ if $T\circ j=j\circ f$, $j(0_W)=0$, and $\sp (j(W))$ is dense in $X$. 
\end{definition}

Lipschitz-free operators constitute the main paradigm to illustrate general results on the linearization of non-linear mappings. 
A thorough systematic introduction to Lipschitz-free spaces can be found in sources such as \cite{Wea18}. Here, we provide a concise overview of some fundamental properties and tools that are essential for developing the current work.

Consider a metric space $(M, d)$. By selecting a distinguished  point $0_M \in M$ (commonly referred to simply as $0$ when no ambiguity arises), $\lip_0(M)$ is defined as the space of Lipschitz functions $f \colon M \to \mathbb{R}$ that vanish at $0$, which forms a real Banach space when equipped with the norm given by the Lipschitz constant $\|\cdot\|_{\text{Lip}}$, defined as
\begin{equation*}
  \|f\|_{\text{Lip}} := \sup_{x \neq y \in M} \frac{|f(x) - f(y)|}{d(x,y)}.
\end{equation*}

The selection of the distinguished point $0$ is arbitrary within the category of Banach spaces, as the resulting spaces obtained by choosing different distinguished points turn out to be isometrically isomorphic as real Banach spaces--although additional structures may depend on the choice of the distinguished point, see Subsection \ref{subsec:base_point}. 
It is also noteworthy that it suffices to consider complete metric spaces, since $\lip_0$ (and therefore, the Lipschitz-free space) remains isometrically isomorphic regardless of whether $M$ or its completion is considered.

The map $\delta \colon M \to \lip_0(M)^*$, which assigns to each $x \in M$ its corresponding evaluation functional $\delta_x$, is an isometry. Moreover, it is straightforward to observe that $\delta_x$ and $\delta_y$ for $x \neq y$ are linearly independent. It is worth noting that the closed linear span of $\{\delta_x : x \in M\}$ forms a subspace of $\lip_0(M)^*$, and indeed serves as a predual of $\lip_0(M)$, referred to as $\mathcal{F}(M)$, i.e.,
\[
\mathcal{F}(M) := \overline{\text{span}}\{\delta_x : x \in M\} \ \ (\subset \lip_0(M)^*).
\]

This space is known as the \textit{Lipschitz-free space} of $M$. Intuitively, one can think of $\mathcal{F}(M)$ as the Banach space formed by taking $M$ and endowing it with a linear structure, where distinct (non-zero) points in $M$ are linearly independent, and the norm preserves the original metric of $M$. Hence, $\|\delta_x\| = d(x,0)$, and more generally, $\|\delta_x - \delta_y\| = d(x,y)$.

Since the elements of $\lip_0(M)$ are the continuous linear functionals on $\mathcal{F}(M)$, their action is clear. For any $g \in \lip_0(M)$ and $\sum_{i=1}^n a_i \delta_{x_i} \in \mathcal{F}(M)$, we have
\[
\langle g, \sum_{i=1}^n a_i \delta_{x_i} \rangle = \sum_{i=1}^n a_i \langle g, \delta_{x_i} \rangle = \sum_{i=1}^n a_i g(x_i).
\]

For any two distinct points $x, y \in M$, the associated \textbf{molecule} (sometimes referred to as an \textit{elementary molecule}) is defined as
\[
m_{x,y} := \frac{\delta_x - \delta_y}{d(x,y)} \in \mathcal{F}(M).
\]

While $\mathcal{F}(M)$ can be fully described using \textit{Dirac deltas}, molecules offer certain advantages. Since $\delta_x = d(x,0) \cdot m_{x,0}$ for any $x \in M$, we can express the Lipschitz-free space as the closed span of molecules
\[\mathcal{F}(M) = \overline{\text{span}}\{m_{x,y} : x,y \in M\}.\]\

The significance of these molecules becomes apparent when one realizes that they not only have norm one but also constitute a $1$-norming subset of $\mathcal{F}(M)$ for the norm $\|\cdot\|_{\text{Lip}}$, as $\langle g, m_{x,y} \rangle = \frac{g(x) - g(y)}{d(x,y)}$. Molecules also play a key role in the extreme structure of $\mathcal{F}(M)$. For example, it has been recently proven that all extreme points in a Lipschitz-free space are indeed molecules, which has been one of the central open problems in the field \cite{AlPerSm24p}.

Now, consider two metric spaces $(M,d)$ and $(N,p)$, each with distinguished points $0_M$ and $0_N$, respectively. The space of Lipschitz maps $f\colon M \to N$ that satisfy $f(0_M) = 0_N$ is denoted by $\lip_0(M,N)$. The following result is fundamental for connecting non-linear dynamics with linear dynamics via Lipschitz-free spaces.

\begin{theorem}\label{thm:linearization}
    Let $(M,d)$ and $(N,p)$ be metric spaces with distinguished points $0_M$ and $0_N$, respectively, and let $f \in \lip_0(M,N)$. Then, there exists a unique bounded linear operator $T_f \colon \mathcal{F}(M) \to \mathcal{F}(N)$ such that $\|T_f\| = \|f\|_{\text{Lip}}$, and the following diagram commutes:

    \begin{center}
    \begin{tikzpicture}
      \matrix (m) [matrix of math nodes,row sep=3em,column sep=4em,minimum width=2em]
      {
         M & N \\
         \mathcal{F}(M) & \mathcal{F}(N) \\};
      \path[-stealth]
        (m-1-1) edge node [left] {$\delta_M$} (m-2-1)
                edge node [above] {$f$} (m-1-2)
        (m-2-1.east|-m-2-2) edge node [below] {$T_f$} (m-2-2)
        (m-1-2) edge node [right] {$\delta_N$} (m-2-2);
    \end{tikzpicture}
    \end{center}
\end{theorem}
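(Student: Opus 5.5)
The plan is to build $T_f$ as the restriction of an adjoint operator, which gives the bound, the commuting diagram and uniqueness at once. Consider the composition operator $C_f\colon \lip_0(N)\to \lip_0(M)$, $C_f(g)=g\circ f$. It is well defined because $g\circ f$ is Lipschitz and vanishes at $0_M$, since $f(0_M)=0_N$ and $g(0_N)=0$. It is clearly linear, and
\[
\|g\circ f\|_{\lip}\le \|g\|_{\lip}\|f\|_{\lip},
\]
so $\|C_f\|\le \|f\|_{\lip}$. Its adjoint $C_f^*\colon \lip_0(M)^*\to \lip_0(N)^*$ is bounded with the same norm. On evaluation functionals it acts by $\langle C_f^*\delta_x, g\rangle = g(f(x)) = \langle \delta_{f(x)}, g\rangle$, so $C_f^*\delta_x=\delta_{f(x)}$.

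Next I show that $C_f^*$ maps $\free{M}$ into $\free{N}$. It sends each $\delta_x$ into $\free{N}$, hence it sends $\spa\{\delta_x: x\in M\}$ into $\free{N}$. By continuity it sends the closure $\free{M}$ into the closed subspace $\free{N}$. So I define $T_f:=C_f^*|_{\free{M}}$. Then $T_f\circ\delta_M=\delta_N\circ f$ by the computation above, and $\|T_f\|\le\|C_f^*\|\le\|f\|_{\lip}$.

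For the reverse norm inequality I use molecules. For $x\ne y$ with $f(x)\ne f(y)$,
\[
\|T_f m_{x,y}\| = \frac{\|\delta_{f(x)}-\delta_{f(y)}\|}{d(x,y)} = \frac{p(f(x),f(y))}{d(x,y)},
\]
because $\delta_N$ is an isometry. Since $\|m_{x,y}\|=1$, taking the supremum over pairs gives $\|T_f\|\ge\|f\|_{\lip}$. This relies on the fact, recalled earlier, that $\|\delta_x-\delta_y\|=d(x,y)$; that fact is itself a McShane-type extension argument, which I would take as given. Uniqueness is immediate: any bounded linear operator making the diagram commute is determined on $\spa\{\delta_x\}$, which is dense in $\free{M}$.

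There is no serious obstacle. The only point needing care is that the range of $C_f^*$ restricted to $\free{M}$ actually lies in $\free{N}$ rather than in all of $\lip_0(N)^*$, and this is handled by the density-and-closedness argument. Alternatively, one could define $T_f$ directly on finite sums by $\sum a_i\delta_{x_i}\mapsto\sum a_i\delta_{f(x_i)}$. That map is well defined by linear independence of the deltas, and its bound follows by duality, $\langle g,\sum a_i\delta_{f(x_i)}\rangle=\langle g\circ f,\sum a_i\delta_{x_i}\rangle$, before extending by density. I would choose the adjoint route because it avoids checking well-definedness separately.
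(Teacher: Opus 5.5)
Your proof is correct. Note that the paper gives no proof of this statement. It quotes it as the standard linearization property of Lipschitz-free spaces.

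The only mechanism the paper invokes later, in the proof of Theorem~\ref{thm:whyLF}, is the universal property. Every Lipschitz map $F\colon M\to X$ into a Banach space $X$ with $F(0_M)=0$ extends uniquely to a bounded linear $\hat F\colon \free{M}\to X$ with $\hat F\circ\delta_M=F$ and $\|\hat F\|=\|F\|_{\text{Lip}}$. Applied to $F=\delta_N\circ f$, whose Lipschitz constant is exactly $\|f\|_{\text{Lip}}$ because $\delta_N$ is an isometry, this gives the theorem at once.

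Your route via the adjoint of the composition operator $g\mapsto g\circ f$ is essentially the proof of that universal property, specialized to $X=\free{N}$. It fits the paper's definition of $\free{M}$ as a closed subspace of $\lip_0(M)^*$: the inclusion $C_f^*(\free{M})\subset\free{N}$ follows from density and closedness, and you never have to check well-definedness on finite sums. The universal-property route is more general, since it handles any Banach target, but it presupposes that extension theorem. Your alternative of defining $T_f$ on finite sums and bounding it by duality is precisely how that theorem is usually proved.

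Two minor remarks:
\begin{itemize}
\item Restricting to pairs with $f(x)\neq f(y)$ in the lower norm bound is unnecessary. The identity $\|T_f m_{x,y}\|=p(f(x),f(y))/d(x,y)$ holds for all $x\neq y$, with both sides zero when $f(x)=f(y)$.
\item The isometry $\|\delta_u-\delta_v\|=p(u,v)$ needs no McShane-type extension. Testing against the $1$-Lipschitz function $z\mapsto p(z,v)-p(0_N,v)$, which vanishes at $0_N$, already gives the lower bound, and the upper bound is immediate from the definition of the dual norm.
\end{itemize}
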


Therefore, every $f \in \lip_0(M,M)$ induces an operator $T_f \in \mathcal{L}(\mathcal{F}(M))$, and this correspondence opens the possibility of examining which conditions on the original dynamical system $(M,f)$ ensure a particular dynamical property on $(\mathcal{F}(M), T_f)$.

\subsection{Why Lipschitz-free operators?}\label{subsec:why}

Given a Lipschitz map $f\colon M\to M$ on a metric space $M$ with a fixed point, as mentioned before, there might be several possible linearizations, and some of them resulting in an operator $T$ on a non-metrizable topological vector space $X$. A key dynamical property, like hypercyclicity, could be difficult to check for $T$ in case that we cannot apply a Baire argument directly on $X$. This makes the case of Lipschitz-free operators special: If $M$ is separable then $\mathcal{F}(M)$ is a separable Banach space in which we can apply Baire arguments. Together with a universality property of Lipschitz-free spaces we will be able to use the Lipschitz-free operators as a ``tool'' to obtain dynamical properties that usually need Baire arguments for linearizations defined on complete locally convex spaces which are not metrizable. 

To do this, we first recall that, given continuous maps $S:Y\to Y$ and $T\colon X\to X$ on topological spaces $X$ and $Y$, $T$ is called  \textit{quasi-conjugate} to $S$ if there exists a continuous map $\phi:Y\to X$ with dense range
such that $T\circ \phi = \phi\circ S$, that is, the diagram
\begin{center}
    \begin{tikzpicture}
      \matrix (m) [matrix of math nodes,row sep=3em,column sep=4em,minimum width=2em]
      {
         Y & Y \\
         X & X \\};
      \path[-stealth]
        (m-1-1) edge node [left] {$\phi$} (m-2-1)
                edge node [above] {$S$} (m-1-2)
        (m-2-1.east|-m-2-2) edge node [above] {$T$} (m-2-2)
        (m-1-2) edge node [right] {$\phi$} (m-2-2);
    \end{tikzpicture}
\end{center}
commutes. The dynamical properties that we are interested on (hypercyclicity, topological (weakly) mixing property, etc) are preserved under quasi-conjugacies. This is the case for sequential hypercyclicity too. We recall that an operator $T\colon X\to X$ on a topological vector space $X$ is \emph{sequentially hypercyclic} if there is an $x\in X$ such that, for each $y\in X$, there is a subsequence of the orbit of $x$ that converges to $y$.

\begin{definition}\label{def:sL}
    A map $h:M\to X$ from a metric space $(M,d)$ into a locally convex space $X$ is \emph{seminorm-Lipschitz} if, for every continuous seminorm $p$ on $X$, there is $c_p>0$ such that 
    $$
    p(h(x)-h(y))\leq c_p d(x,y), \ \ \ \forall x,y\in M.
    $$
\end{definition}

This property is usually easy to check in particular examples. A weaker (and even easier to check) property is the following. 

\begin{definition}\label{def:wL}
    A map $h:M\to X$ from a metric space $(M,d)$ into a locally convex space $X$ is \emph{weakly Lipschitz} if, for every continuous linear functional $\phi\in X'$, the map $\phi\circ h: M\to \R$ is Lipschitz.
\end{definition}

Obviously, the seminorm-Lipschitz property is stronger than the weak Lipschitz property. It turns out that, for barrelled spaces, the converse is true. We recall that $X$ is \emph{barrelled} if every barrel in $X$ (i.e.,  convex, balanced, absorbing, and closed set) is necessarily a neighbourhood of $0$. Easy examples of barrelled spaces are countable inductive limits of Fréchet spaces. We refer to \cite{BPC} for more details and examples of barrelled spaces. 

\begin{proposition}\label{prop:wL-sL}
    If $h:M\to X$ is {weakly Lipschitz} from a metric space $(M,d)$ into a locally convex space $X$ whose strong dual $X'_b$ is barrelled, then $h$ is seminorm-Lipschitz. 
\end{proposition}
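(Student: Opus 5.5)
The plan is to show that, for each continuous seminorm $p$ on $X$, the set of difference quotients
$$
B:=\left\{\frac{h(x)-h(y)}{d(x,y)} : x,y\in M,\ x\neq y\right\}\subset X
$$
is absorbed by the polar of every $0$-neighbourhood. This amounts to a bidual/equicontinuity argument carried out in $X'_b$. First we exploit the hypothesis: for every $\phi\in X'$, the map $\phi\circ h$ is Lipschitz. Hence
$$
\sup_{b\in B}|\phi(b)|=\|\phi\circ h\|_{\lip}<\infty ,
$$
so $B$ is weakly bounded in $X$. Then $B$ is bounded in $X$ by Mackey's theorem, since weakly bounded sets in a locally convex space are bounded. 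With that, the problem reduces to showing that a bounded set is also equicontinuous, and this is where barrelledness of $X'_b$ enters.

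Next we view $X\subset (X'_b)'$ through the canonical evaluation map. Every $x\in X$ is continuous on $X'_b$, because $|\langle\phi,x\rangle|$ is dominated by the seminorm $\sup_{b\in\{x\}}|\phi(b)|$ of the strong topology. Since $B$ is bounded in $X$, its polar $B^\circ=\{\phi\in X' : \sup_{b\in B}|\phi(b)|\le 1\}$ is a $0$-neighbourhood in $X'_b$ by the very definition of the strong topology. Consequently, $B$ is an equicontinuous subset of $(X'_b)'$. Strictly speaking, barrelledness of $X'_b$ is what one uses to turn pointwise boundedness of a family in $(X'_b)'$ into equicontinuity. Here we already obtain equicontinuity directly from boundedness, so if the direct argument is not enough, the fallback is to invoke the Banach--Steinhaus theorem on the barrelled space $X'_b$.

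Finally, fix a continuous seminorm $p$ on $X$ and let $U=\{p\le1\}$. The polar $U^\circ$ is equicontinuous in $X'$, hence $\sigma(X',X)$-compact by Alaoglu--Bourbaki, and by the bipolar theorem $p(x)=\sup_{\phi\in U^\circ}|\phi(x)|$. We need
$$
\sup_{b\in B}\sup_{\phi\in U^\circ}|\phi(b)|<\infty .
$$
This is exactly the statement that $U^\circ$ is absorbed by $B^\circ$ in $X'$. Equivalently, $B$ must be a bounded set in $X$, which is uniform boundedness of $p$ on $B$, i.e.\ $\sup_{b\in B}p(b)=:c_p<\infty$. But $\sup_B p<\infty$ is immediate once $B$ is bounded in $X$, and that gives $p(h(x)-h(y))\le c_p\,d(x,y)$ for all $x,y\in M$.

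The main obstacle is the step from weak boundedness to boundedness. Mackey's theorem handles it in any locally convex space, which would make the barrelledness hypothesis superfluous. The intended route is likely a Banach--Steinhaus argument on $X'_b$: regard the difference quotients as a pointwise bounded family of continuous functionals on $X'_b$, conclude equicontinuity from barrelledness, and hence obtain boundedness on the equicontinuous set $U^\circ$ (which is strongly bounded). I would first verify whether Mackey's theorem suffices. If for some reason it does not, I would present the argument through barrelledness of $X'_b$ and the Banach--Steinhaus theorem, as the hypothesis of Proposition~\ref{prop:wL-sL} suggests.
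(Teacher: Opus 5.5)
Your argument is correct, and it applies the uniform boundedness principle in a different place from the paper.

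\textbf{The paper's route.} The paper regards $\Psi_{x,y}(\phi)=d(x,y)^{-1}\bigl(\phi(h(x))-\phi(h(y))\bigr)$ as a pointwise bounded family of continuous linear functionals on $X'_b$. It uses barrelledness of $X'_b$ to conclude that this family is bounded on bounded subsets of $X'_b$. It then evaluates on the strongly bounded set $B_p=\{\phi\in X' : |\phi|\le p\}$ (your $U^\circ$) and finishes with Hahn--Banach. This is exactly the fallback you sketch in your last paragraph.

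\textbf{Your route.} You note that the set $B$ of difference quotients is $\sigma(X,X')$-bounded by hypothesis. Mackey's theorem then makes $B$ bounded in $X$, and $\sup_B p<\infty$ is already the conclusion. Mackey's theorem is itself a Banach--Steinhaus argument, but it is carried out on the Banach space spanned by the $\sigma(X',X)$-compact disc $U^\circ$ (a Banach disc), which is always barrelled. So your route needs no hypothesis on $X'_b$. It shows that weakly Lipschitz and seminorm-Lipschitz maps coincide for \emph{every} locally convex target, which makes the barrelledness assumption in the proposition superfluous. The paper's proof is a one-line application once that assumption is granted, but it is strictly less general. Mackey's theorem does suffice, so commit to it rather than hedging.

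\textbf{What to cut when writing it up.} Drop the middle paragraph. The claim that after Mackey ``the problem reduces to showing that a bounded set is also equicontinuous, and this is where barrelledness of $X'_b$ enters'' is mistaken. Boundedness of $B$ in $X$ is already the statement $\sup_B p<\infty$ for every continuous seminorm $p$, so no equicontinuity step and no barrelledness are needed. Also, your opening phrase ``absorbed by the polar of every $0$-neighbourhood'' should read ``absorbed by every $0$-neighbourhood'' (equivalently, $U^\circ$ is absorbed by $B^\circ$), since polars of $0$-neighbourhoods live in $X'$, not in $X$.
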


\begin{proof}
     We consider the family of continuous linear functionals on $X'_b$
 $$
 \Psi_{x,y}(\phi):=d(x,y)^{-1}(\phi(h(x))-\phi(h(y))), \ \ x,y\in M, \ x\neq y, 
 $$
 which is pointwise bounded by assumption. The Uniform Boundedness Principle for barrelled spaces yields that this family is strongly bounded, that is, bounded on bounded sets of $X'_b$. Given a continuous seminorm $p$ on $X$, the set 
 $$
 B_p:=\{ \phi \in X' \ : \ |\phi(x)|\leq p(x) \ \ \forall x\in X\}
 $$
 is a bounded set in $X'_b$, thus there is $c_p>0$ such that $|\Psi_{x,y}(\phi)|\leq c_p$ if $x,y\in M$, $x\neq y$, and $\phi \in B_p$. Therefore, $p(h(x)-h(y))\leq c_pd(x,y)$ for all $x,y\in M$. 
\end{proof}

We are now in conditions to establish the following result, of special relevance for linearizations resulting in an operator on a non-metrizable space. 

\begin{theorem}\label{thm:whyLF}
Let $T\colon X\to X$ be a linearization on a complete locally convex space $X$ of a Lipschitz map $f\colon M\to M$ on a metric space $(M,d)$ ($f(0_M)=0_M$) such that the associated injection $j\colon M\to X$ is seminorm-Lipschitz. Then $T$ is quasi-conjugate to the induced Lipschitz-free operator $T_f$ on $\mathcal{F}(M)$.
\end{theorem}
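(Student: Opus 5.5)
We build the quasi-conjugacy $\phi\colon \free{M}\to X$ by extending $j$ linearly: on finitely supported elements set $\phi\bigl(\sum_{i=1}^n a_i\delta_{x_i}\bigr):=\sum_{i=1}^n a_i j(x_i)$. This is well defined because the $\delta_x$, $x\neq 0_M$, are linearly independent and $\delta_{0_M}=0$, while $j(0_M)=0$. The plan has four steps: show $\phi$ is continuous on $\spa\{\delta_x:x\in M\}$, extend it to $\free{M}$ using completeness of $X$, check the intertwining relation, and check density of the range.

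\textbf{Continuity on the span.} Fix a continuous seminorm $p$ on $X$ and let $c_p$ be the constant from the seminorm-Lipschitz property (Definition \ref{def:sL}). We aim for
\[
p(\phi(\mu))\le c_p\|\mu\|_{\free{M}}
\]
for every finitely supported $\mu$. Use Hahn--Banach on the finite-dimensional subspace $E=\spa\{j(x_1),\dots,j(x_n)\}$. Take a linear functional $\psi$ on $X$ with $|\psi|\le p$ and $\psi(\phi(\mu))=p(\phi(\mu))$. Define $g(x):=\psi(j(x))$ for $x\in M$. Then $g(0_M)=0$ and
\[
|g(x)-g(y)|\le p(j(x)-j(y))\le c_p d(x,y),
\]
so $g\in\lip_0(M)$ with $\|g\|_{\lip}\le c_p$. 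Hence
\[
p(\phi(\mu))=\psi\Bigl(\sum_i a_i j(x_i)\Bigr)=\sum_i a_i g(x_i)=\langle g,\mu\rangle\le c_p\|\mu\|.
\]
This duality step is the only real point of the proof. The one subtlety is that $\psi$ is dominated by the seminorm $p$, not by a norm; Hahn--Banach for sublinear functionals handles this directly, and any functional dominated by $p$ is automatically continuous. Without this, one might try to estimate through molecule decompositions, which only control the norm up to infima over representations; the duality argument avoids that.

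\textbf{Extension.} The estimate shows $\phi$ is uniformly continuous from the dense subspace $\spa\{\delta_x\}\subset\free{M}$ into $X$. Since $X$ is complete, $\phi$ extends uniquely to a continuous linear map $\phi\colon\free{M}\to X$.

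\textbf{Intertwining.} On deltas,
\[
\phi(T_f\delta_x)=\phi(\delta_{f(x)})=j(f(x))=T(j(x))=T\phi(\delta_x),
\]
using $T\circ j=j\circ f$ from the definition of linearization. Both $\phi\circ T_f$ and $T\circ\phi$ are continuous and linear (by Theorem \ref{thm:linearization} and the continuity of $T$), and they agree on the dense span of the deltas, so $\phi\circ T_f=T\circ\phi$.

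\textbf{Dense range.} The range of $\phi$ contains $\spa(j(M))$, which is dense in $X$ by the definition of linearization. Therefore $\phi$ is a quasi-conjugacy from $T_f$ to $T$.
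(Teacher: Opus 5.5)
Your proof is correct, but it takes a different route from the paper.

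The paper argues structurally, in three steps:
\begin{enumerate}
\item It writes the complete space $X$ as a projective limit of Banach spaces $X_\alpha$.
\item It notes that each $\varrho^\alpha\circ j\colon M\to X_\alpha$ is Lipschitz. It then invokes the universal (linearization) property of $\free{M}$ for Banach-space targets, which gives operators $\mu^\alpha\colon\free{M}\to X_\alpha$ with $\mu^\alpha\circ\delta_M=\varrho^\alpha\circ j$.
\item It assembles these into $\phi\colon\free{M}\to X$ with $\phi\circ\delta_M=j$ via the universal property of the projective limit.
\end{enumerate}

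You bypass projective spectra entirely and in effect reprove the universal property directly at the level of seminorms. Your Hahn--Banach duality estimate $p(\phi(\mu))\le c_p\|\mu\|$ is exactly the argument hidden inside the paper's black box for Banach targets. Completeness then enters only through extending a uniformly continuous linear map from the dense span of the deltas.

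The paper's version is shorter given the standard facts it cites, and it makes visible that the theorem is ``universality of $\free{M}$ plus the projective-limit structure of complete locally convex spaces''. However, it leaves two points implicit that you check explicitly:
\begin{itemize}
\item the compatibility $\varrho_\beta^\alpha\circ\mu^\beta=\mu^\alpha$ needed to apply the projective-limit property, which follows from uniqueness of the extensions on the deltas;
\item the intertwining relation $\phi\circ T_f=T\circ\phi$.
\end{itemize}

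Your version is self-contained and gives an explicit bound with the same constants $c_p$. It also shows that, since $\free{M}$ is metrizable, only sequential completeness of $X$ is actually used.
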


\begin{proof}
    Since $X$ is a complete locally convex space, it can be written as a projective limit $X= \proj {\mathcal X}$ of a projective spectrum $\mathcal X=(X_\alpha)_{\alpha\in I}$ of Banach spaces, where $I$ is a directed index set, and the operators $\ro \beta \alpha \colon X_\beta \to X_\alpha$ for $\alpha \le \beta$, are the spectral maps ($\ro \beta \alpha \circ \ro \gamma \beta = \ro  \gamma \alpha$ and $\ro \alpha \alpha =I_{X_\alpha}$, the identity on $X_\alpha$, for any $\alpha\leq\beta\leq \gamma$). Let $\varrho^\alpha\colon X\to X_\alpha$ be the natural projection, so that $\ro   \alpha \beta \circ\varrho^\alpha=\varrho^\beta$ for $\beta \leq \alpha$. 

    The universality property of the Lipschitz-free Banach space $\mathcal{F}(M)$ yields that, for each $\alpha\in I$, the Lipschitz map $g_\alpha:=\varrho^\alpha\circ j\colon  M\to X_\alpha$ factorizes through $\mathcal{F}(M)$ as $g_\alpha=\mu^\alpha\circ \delta_M$ with an operator $\mu^\alpha\colon \mathcal{F}(M)\to X_\alpha$. By definition of projective limit, there exists an operator  $\phi\colon \mathcal{F}(M)\to X$ such that $\phi\circ \delta_M=j$. Finally, since $\overline{\spa(j(M))}=X$, we conclude that $\phi$ has dense range and $T$ is quasi-conjugate to the Lipschitz-free operator $T_f$ induced by $f$.
\end{proof}

\begin{example} Let $T\colon X\to X$ be the operator given by $T(x_1,x_2,\dots )=(y_k)_{k\ge 1}$ with
\[
y_k=4^k\sum_{j=0}^{k} (-1)^j\binom{k}{j} x_{j+k}, \ \ \ k\in\N , 
\]
which is the Carleman linearization of the logistic polynomial $p\colon [0,1]\to [0,1]$, $p(x):=4x(1-x)$, where 
\[
X=\{ (x_i)_i\in\R^\N \ ; \ \exists r>0 \mbox{ such that } \sup_i\abs{x_i}r^i<\infty \} .
\]
Actually, the complex version of this linearization is considered in  \cite{MP15} to show that $T$ is mixing and chaotic.
The space $X$ is an inductive limit of Banach spaces whose dual is the Fréchet space 
$$
X'=\{ (y_i)_i\in\R^\N \ : \ \sum_{i\in\N}\abs{y_i}R^i<\infty  \ \ \forall R>0\} , 
$$
and the embedding 
$j \colon [0,1]\to X$ is defined as $j(x)=(x,x^2,x^3,\dots )$. We show that 
$\overline{\spa(j(M))}=X$  by following the argument given in \cite{MP15} for the complex case: If $\phi\in X'$, $\phi=(y_i)_i$, is such that $\phi(j(x))=\sum_i y_ix^i=0$ for all $x\in M$, then $f(z):=\sum_i y_iz^i$ is an entire function that annihilates on $[0,1]$, which means that $\phi=0$, and the desired density is shown. Moreover, if $q$ is a continuous seminorm on $X$, we consider the bounded set $B:=\{ (x_i)_i\in\R^\N \ ; \  \sup_i\abs{x_i}2^{-i}\leq 1 \}$. Let $c>0$ such that $q(u)\leq c$ for all $u\in B$. If $x,y\in [0,1]$ then, 
$$
q(j(x)-j(y))=\abs{x-y}q((1,x+y,x^2+xy+y^2,\dots))\leq c\abs{x-y}, 
$$
and $j\colon [0,1]\to X$ is seminorm-Lipschitz. 
Now, by Theorem~\ref{thm:whyLF}, we know that $T$, being quasi-conjugated to the hypercyclic Lipschitz-free operator $T_p$ induced by $p$ on the Banach space $\mathcal{F}([0,1])$,  is sequentially hypercyclic. We recall that the complexification $X+iX$ of $X$ 
can be identified with $\mathcal{H}(0)$, the space of holomorphic germs at $0$, so that the associated complexification $T+iT$ of $T$ is also sequentially hypercyclic. 
\end{example}

\subsection{A note on the choice of the base point and the set $\boldsymbol{\lip_0(M,M)}$} \label{subsec:base_point}

    In this article, we deduce the dynamical properties of operators of the form $T_f$ from the specific properties of the underlying metric space $M$ around the base point $0_M$. This approach is particularly relevant since the dynamics of the operator $T_f$ can be difficult to grasp directly, whereas the map $f$ is generally simpler to understand. This leads us to first discuss the choice of the base point. Note that the space $\lip_0(M)$ and its predual $\free{M}$, as Banach spaces, do not depend on the choice of the distinguished point $0_M$. Indeed, considering two different base points $0$ and $0'$  and their corresponding spaces $\lip_0(M)$ and $\lip_{0'}(M)$, we observe that the map $\Phi_{0,0'}\colon \lip_0(M) \to \lip_{0'}(M)$ defined by $\Phi_{0,0'}(f):= f-f(0') $ is linear, isometric and onto. It maps Lipschitz functions vanishing at $0$ to those vanishing at $0'$, so it defines an isometric isomorphism between both Banach spaces. This map is also $w^*$-$w^*$-continuous and is therefore the preadjoint of a certain operator $\Phi_{0,0'*}\colon \mathcal{F}_{0'}(M) \to \mathcal{F}_0(M)$ which is again a linear onto isometry. 
    
    But, it may be worth noting that these maps may not carry any other additional structure. For instance, multiplicative or order structures (as in Banach algebras or Banach lattices theory) are often considered in the literature, and their properties are not necessarily preserved by the aforementioned isometric isomorphism, so they strongly depend on the choice of the distinguished point. The same happens for the set of Lipschitz-free operators. Indeed, the conjugation through an isomorphism gives a correspondence between operators in the two spaces, but given $T_f\in \mathcal{L}(\mathcal{F}_0(M))$, and considering $S:= (\Phi_{0,0'*})^{-1}\circ T_f\circ \Phi_{0,0'*} \in \mathcal{L}(\mathcal{F}_{0'}(M)) $  its corresponding operator through the isomorphism, although it is preserving the same dynamical properties as $T_f$, it is not guaranteed that $S$ is a Lipschitz-free operator (i.e. it may not exist $g\in \lip_{0'}(M,M)$ such that $S= T_g$). 
    
    Loosely speaking, the choice of the base point $0$ in the metric space $M$ is fixing the set of Lipschitz mappings on $M$ to consider (i.e., the set $\lip_0(M,M)$) by imposing the restriction $f(0)=0$; thus, determining the set of $T_f$ operators on $\mathcal{F}_0(M)$. 
    
    However, if we shift the focus to study a given Lipschitz map $f\colon M\to M$ having two different fixed points, the chosen base point $0$ and $0'$, then both Lipschitz-free operators operators $T_f\colon \mathcal{F}_0(M)\to \mathcal{F}_0(M)$ and $T_f\colon \mathcal{F}_{0'}(M)\to \mathcal{F}_{0'}(M)$ turn out to be topologically conjugated by $\Phi_{0,0'*}$. Thus, the dynamical properties preserved by conjugacy (such as hypercyclicity or weakly mixing)  of such $T_f$ may be equally studied through shifting the base point of $M$ to any of its fixed points. 

\subsection{Links between backward shifts on trees and Lipschitz-free operators}
\label{subsec:shifts-M}

The goal of this section consists in showing that backward shifts on trees over $\ell^1$ can be seen as Lipschitz-free operators by providing the corresponding metric space and Lipschitz map.

Following \cite{GEPa23}, 
a \textit{countable directed tree} is a pair $(V,E)$ consisting of a countable set of vertices $V$ and a set of directed edges $E\subset V\times V\backslash \{(v,v):v\in V\}$. We also assume that the graph is connected without cycles and satisfies that for every $v\in V$ there exists at most one $w\in V$ such that $(w,v)\in E$. In this case,  $w$ is called the \textit{parent} of $v$ (denoted by $w=\pare(v))$ and $v$ is a child of $w$ ($v\in \chil(w)$). Note that, due to the connectedness assumption, there is at most one vertex without a parent; if such a vertex exists, it is called the \textit{root} of the tree. 

Concerning the notation, since the set $V$ and the map $\pare$ encode all the information on the graph, we might use $V$ and $\pare$ to characterize the graphs in what follows. Also, $V$ is a countable set, so we may enumerate it by a countable set of indices, sometimes writing $V:=\{v_n\}_{n=0}^\infty$ or $V:=\{v_n\}_{n\in \ZZ}$ and in such cases, provided that $V$ is a rooted tree, we will always use $v_0$ to denote the root, while if $V$ is unrooted, $v_0$ may denote an arbitrary point of $V$.  

Given $\mu=(\mu_v)_{v\in V}\in \R^V$ a sequence of non-zero weights, the (real) $\ell^1$ space over the countable directed tree $V$ is defined as

\[\ell^1(V,\mu):= \{x\in \RR^V: \|x\|_1:= \sum_{v\in V} |x(v)\mu_v| < \infty \} \]
The unweighted case $\mu=1$ will simply be denoted as $\ell^1(V)$.

The backward shift operator over elements $x\in \RR^V$ is formally defined as 
\[(B x)(v):= \sum_{u\in \chil(v)} x(u), \quad  v\in V.\]

Then, for the metric space, take the original tree adding an artificial distinguished point, $M_V:=V \cup \{0\}$ and the distance defined for $v\ne w$ in $M_V$ by
\[
d(v,w)= |\mu_v|+|\mu_w|
\]
where we consider $\mu_0=0$. The Banach space $\free{M_V}$ is then isometrically isomorphic to $ \ell^1(V,\mu)$ through the linear isometry defined by $\Phi(\delta_v):=e_v$ for any $v\in V$.

This kind of metric is often used in the literature (see for instance \cite{ACP21} or \cite{Tap24}), as any of such metric spaces $M$ will produce Lipschitz-free spaces isometrically isomorphic to $\ell^1$---see \cite{God10}. More specifically, Proposition 1.6 from \cite{ACP21} already shows the usefulness of this representation since the backward shift operator on $\ell^1$ can be achieved as a $T_f$ operator. We extend and complement such a result by providing the corresponding metric space and Lipschitz map associated with weighted backward shifts on trees, which will be an essential pool of examples.

Now, consider the mapping $f\colon M_V\to M_V$ by
\begin{align*}
    f(v):=\begin{cases} 
      $0$ & \text{if $v$ is the root of $V$ or $v=0$}, \\
      \pare(v) & \text{otherwise.}
   \end{cases}
\end{align*}
It is easily seen that
\[\lip(f)= \sup_{v\in V\backslash\{root\}} \dfrac{|\mu_{\pare(v)}|}{|\mu_v|} ,\]
so $f\in \lip_0(M_V,M_V)$ if and only if $\sup_{v\in V\backslash\{root\}} \dfrac{|\mu_{\pare(v)|}}{|\mu_v|} <\infty$ if and only if the backward shift $B$ is continuous on $\ell^1(V,\mu)$. Thus, $T_f$ is conjugated to the backward shift $B$ on $\ell^1(V,\mu)$. 

In Linear Dynamics we can alternate between an unweighted backward shift $B$ on the weighted space $\ell^1(V,\mu)$ and  a weighted backward shift $B_\lambda$ on an unweighted space $\ell^1(V)$ where given a sequence of non-zero weights $\lambda=(\lambda_v)_{v\in V\backslash\{root\}}\subset \RR^{V\backslash\{root\}}$, the weighted backward shift operator $B_{\lambda}$ over elements $x\in \RR^V$ is formally defined as 
\[(B_\lambda x)(v):= \sum_{u\in \chil(v)}\lambda_u x(u), \quad  v\in V.\]
Indeed, both dynamical systems are topologically conjugated through the natural isometric isomorphism
\begin{equation*}
\begin{tikzcd}
\ell^1(V,\mu) \arrow[r, "B"] \arrow[d, "\phi_{\mu}"'] & \ell^1(V,\mu) \arrow[d, "\phi_{\mu}"] \\
\ell^1(V) \arrow[r, "B_{\lambda}"'] & \ell^1(V)
\end{tikzcd}
\end{equation*}
where $\phi_\mu((x_v)_{v\in V}):=(\mu_vx(v))_{v\in V}$, and the weights $\mu$ and $\lambda$ satisfy the relations
\[
\lambda_v = \frac{\mu_{\operatorname{par}(v)}}{\mu_v}, \quad \text{for }v \in V, \ v \neq \text{root}.
\]

Indeed, notice that one could alternatively achieve the same  conjugacy relation of $(\ell^1(V), B_\lambda)$ and $(\free{M_V}, T_f)$ by defining the same distance on $M_V$ by the relation
\begin{align*}
    d(v,0):=d_v=\begin{cases} 
      1 & \text{if $v=v_0$ (being root or not)} \\
      \dfrac{d_{\pare(v)}}{|\lambda_v|} & \text{otherwise,}
   \end{cases}
\end{align*}
and $d(u,w):= d(u,0)+d(w,0)$ for $u\neq w$ in $M_V$.

Notice that $d_v$ is well-defined by connectedness of the tree, and that $\free{M_V}$ is isometrically isomorphic to $ \ell^1(V)$ through the linear isometry defined by $\Phi(m_{v,0}):=e_v$. 

It is easily seen that in this case,
\[\lip(f)=\sup_{v\in V\backslash\{root\}} \dfrac{d_{\pare(v)}}{d_v}= \sup_{v\in V\backslash\{root\}} |\lambda_v|,\]
so $f\in \lip_0(M_V,M_V)$ if and only if $\sup_{v\in V\backslash\{root\}}|\lambda_v|<\infty$, if and only if the weighted backward shift $B_\lambda$ is continuous on $\ell^1(V)$.

By combining all the argument above, we can state the relation between backwards shifts on trees and Lipschitz-free operators for further reference, in its more general form.

\begin{proposition}\label{prop:backward-Tf}
    Let $V$ be a countable directed tree (with or without root) and $B_\lambda$ be a continuous weighted backward shift defined on the weighted real space $\ell^1(V,\mu)$, where $\mu$ and $\lambda$ are sequences of non-zero weights. Then, there exists a metric $M_V$ and a Lipschitz mapping $f\in\lip_0(M_V,M_V)$ such that the dynamical system $(\ell^1(V,\mu),B_{\lambda})$ is topologically conjugated to $(\free{M_V},T_f)$ through an isometric isomorphism. 
\end{proposition}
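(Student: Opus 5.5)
The plan is to compose two conjugacies. First I reduce $(\ell^1(V,\mu),B_\lambda)$ to an unweighted shift on a suitably weighted $\ell^1$ space. Then I realize that unweighted shift as a Lipschitz-free operator on $M_V$ with the metric $d(v,w)=|\mu'_v|+|\mu'_w|$.

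\textbf{Step 1: absorb $\lambda$ into the measure.} Fix $v_0\in V$, the root if there is one. Define $\nu_v\neq 0$ recursively by $\nu_{v_0}=1$ and $\nu_v=\nu_{\pare(v)}/\lambda_v$. Moving down the tree determines $\nu$ on the descendants of $v_0$. Moving up, $\nu_{\pare(w)}=\lambda_w\nu_w$ determines it on the ancestors. Connectedness then gives a unique consistent definition on all of $V$. Set $\psi(x)(v)=x(v)/\nu_v$ and check directly that
\[
\psi\circ B_\lambda=B\circ\psi .
\]
Indeed, for $u\in\chil(v)$ we have $\lambda_u/\nu_v=1/\nu_u$, so $\psi B_\lambda x(v)=\sum_{u\in\chil(v)}x(u)/\nu_u=B\psi x(v)$.

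\textbf{Step 2: choose the target weight.} Put $\mu'_v=\mu_v\nu_v$. Then $\psi\colon\ell^1(V,\mu)\to\ell^1(V,\mu')$ is an onto isometry, since $\sum|x(v)/\nu_v|\,|\mu_v\nu_v|=\sum|x(v)\mu_v|$. Consequently $B_\lambda$ on $\ell^1(V,\mu)$ is isometrically conjugate to the unweighted $B$ on $\ell^1(V,\mu')$. Continuity of $B$ there follows from continuity of $B_\lambda$, because the two operators are conjugate by an onto isometry.

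\textbf{Step 3: realize $B$ as $T_f$.} Take $M_V=V\cup\{0\}$ with the metric $d(v,w)=|\mu'_v|+|\mu'_w|$, where $\mu'_0=0$, and let $f$ be the parent map sending the root and $0$ to $0$.
\begin{itemize}
\item[(a)] The map $\Phi(\delta_v)=e_v$ extends to an onto isometry $\free{M_V}\to\ell^1(V,\mu')$. This is the discussion preceding the statement, resting on the standard fact (see \cite{God10}) that such metric spaces have free space isometric to $\ell^1$ with the $\delta_v$ forming a weighted unit basis.
\item[(b)] $\lip(f)=\sup_v|\mu'_{\pare(v)}|/|\mu'_v|$. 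This is finite exactly when $B$ is bounded on $\ell^1(V,\mu')$, as noted in the text. In particular $f\in\lip_0(M_V,M_V)$.
\item[(c)] The intertwining relation holds on deltas, $\Phi T_f\delta_v=e_{\pare(v)}=Be_v$, with $e_{\pare(v)}$ read as $0$ when $v$ is the root. By linearity and density it holds on all of $\free{M_V}$.
\end{itemize}
Composing, $\Phi^{-1}\circ\psi$ is an isometric isomorphism that conjugates $B_\lambda$ on $\ell^1(V,\mu)$ to $T_f$ on $\free{M_V}$.

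\textbf{Main obstacle.} The only delicate point is making sure the auxiliary weights $\nu$ are well defined on an unrooted tree, where $V$ extends infinitely upward. The recursion in both directions is consistent because each vertex has at most one parent and the tree is connected and acyclic. Everything else is a routine check.
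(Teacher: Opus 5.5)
Your proposal is correct and takes essentially the paper's route: your weights satisfy $\lvert\nu_v\rvert=d_v$ for the paper's recursively defined $d_v$ ($d_{v_0}=1$, $d_v=d_{\pare(v)}/\lvert\lambda_v\rvert$), your diagonal rescaling $\psi$ is the analogue of the paper's conjugacy $\phi_\mu$ run in the opposite direction, and Step 3 is exactly the paper's identification $\Phi(\delta_v)=e_v$ for the metric $\lvert\mu'_v\rvert+\lvert\mu'_w\rvert$ with $f$ the parent map. The only difference is that you carry out in one explicit rescaling the ``combination'' of the two conjugacies that the paper leaves implicit, and you spell out the upward recursion needed on unrooted trees.
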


Let us note that in this correspondence between the backward shift and a Lipschitz map $f\in\lip_0(M_V,M_V)$ , we have started with a tree structure and thus a map $\textit{\pare}$ on $V$ in order to define a Lipschitz map $f\in \lip_0(M_V,M_V)$. If we change our point of view and start with an arbitrary Lipschitz map $f\in \lip_0(M_V,M_V)$ where $M_V:=V \cup \{0\}$ is endowed with the distance defined for $v\ne w$ in $M_V$ by
\[
d(v,w)=|\mu_v|+|\mu_w| \quad \text{(with $\mu_0=0$)},
\]
then $\free{M_V}$ can be identified to $ \ell^1(V,\mu)$ as before and we can consider $T_f$ as an operator on $ \ell^1(V,\mu)$, the action of this operator being given by 
\[T_f(e_v)=\begin{cases} e_{f(v)} &\text{if $f(v)\ne 0$}\\
0 &\text{if $f(v)=0$}\end{cases}.\]
We can therefore define $E=\{(w,v)\in V\times V: w=f(v)\}$. In doing so, $T_f$ can be seen as the backward shift on the graph $(V,E)$. However it is important to notice that $(V,E)$ may not be a directed tree since $(V,E)$ is maybe not connected and since $(V,E)$ may contain cycles. Nevertheless we keep the property that for every $v\in V$, there exists at most one $w\in V$ such that $(w,v)\in E$.\\

The dynamical properties of shifts on trees have been studied through several papers \cite{Ma17, GEPa23, AA25, LP25, GePapreprint, AC26}. In particular, weakly mixing shifts on trees have been characterized by Grosse-Erdmann and Papathanasiou \cite{GEPa23}.

\begin{theorem}[\cite{GEPa23}]\label{caracshift}
Let $(V,E)$ be a countable directed tree and $\mu$ a sequence of positive weights on $V$. Suppose that the backward shift $B$ is continuous on $\ell^1(V,\mu)$. Then
\begin{enumerate}
\item If $V$ is a rooted tree then the following assertions are equivalent:
\begin{itemize}
\item $B$ is hypercyclic on $\ell^1(V,\mu)$,
\item $B$ is weakly mixing on $\ell^1(V,\mu)$,
\item there exists an increasing sequence $(n_k)_k$ of positive integers such that, for each $v\in V$,
\[\inf_{u\in \text{Chi}^{n_k}(v)}\mu_u\xrightarrow[k\to +\infty]{} 0.\]
\end{itemize}
\item If $V$ is an unrooted tree then the following assertions are equivalent:
\begin{itemize}
\item $B$ is hypercyclic on $\ell^1(V,\mu)$,
\item $B$ is weakly mixing on $\ell^1(V,\mu)$,
\item there exists an increasing sequence $(n_k)_k$ of positive integers such that, for each $v\in V$,
\[\inf_{u\in \text{Chi}^{n_k}(v)}\mu_u\xrightarrow[k\to +\infty]{} 0 \quad \text{and}\quad \min\left(\mu_{\pare^{n_k}(v)},\inf_{u\in \text{Chi}^{n_k}(\pare^{n_k}(v))}\mu_u\right)\xrightarrow[k\to +\infty]{} 0.\]
\end{itemize}
\end{enumerate}
\end{theorem}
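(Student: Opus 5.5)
The plan is to prove the three-way equivalence in each case by the cycle weakly mixing $\Rightarrow$ hypercyclic $\Rightarrow$ condition $\Rightarrow$ weakly mixing. The first implication is automatic, since weak mixing implies topological transitivity. On the separable Banach space $\ell^1(V,\mu)$, transitivity is equivalent to hypercyclicity by the Birkhoff transitivity theorem. We work on $\ell^1(V,\mu)$ with the unweighted backward shift $B$, where $B e_u = e_{\pare(u)}$ (and $Be_{root}=0$). Then $B^n e_u = e_{\pare^n(u)}$, and for $x=\sum_u x(u)e_u$ we have $(B^n x)(v)=\sum_{u\in\chil^n(v)}x(u)$.

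For condition $\Rightarrow$ weakly mixing, we use the Hypercyclicity Criterion along $(n_k)$ on the dense set $\mathcal D$ of finitely supported vectors. We take $T_k=B^{n_k}$, which tends to $0$ on $\mathcal D$ in the rooted case, since eventually $\pare^{n_k}$ exits the tree. As right inverses we choose $S_k e_v := e_{u_k(v)}$, where $u_k(v)\in\chil^{n_k}(v)$ almost realizes the infimum, so that $\mu_{u_k(v)}\to 0$. Then $\|S_k e_v\|=\mu_{u_k(v)}\to0$ and $B^{n_k}S_k e_v=e_v$, and we extend $S_k$ linearly on $\mathcal D$. The Criterion then gives weak mixing.

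In the unrooted case $B^{n_k}e_v=e_{\pare^{n_k}(v)}$ does not vanish. We only need $B^{n_k}x$ to be small, or to be corrected. For this we use a perturbed version of the Criterion, in the form ``for $x,y\in\mathcal D$ and $\varepsilon>0$ there is $z$ with $\|z-x\|<\varepsilon$ and $\|B^{n_k}z-y\|<\varepsilon$''. This form, applied along a common subsequence to pairs, still yields weak mixing. If $\mu_{\pare^{n_k}(v)}\to0$, then $\|B^{n_k}e_v\|\to0$ and we proceed as before. Otherwise we take $w\in\chil^{n_k}(\pare^{n_k}(v))$ with $w\ne v$ and $\mu_w$ small, and replace $e_v$ by $e_v-e_w$. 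This kills the image under $B^{n_k}$ at the cost of a perturbation of size $\mu_w$. Some care is needed when $w=v$ is forced, or when several support points share ancestors; this is handled by choosing distinct far vertices with small weight, or by noting that the infimum over the large set $\chil^{n_k}$ can be achieved away from a finite set. The minimum in the condition lets us choose, for each $v$ and $k$, whichever of the two options works, and passing to a subsequence makes a single choice valid for all $v$ in a finite support.

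For hypercyclic $\Rightarrow$ condition, we adapt the argument of \cite{GEPa23}. Fix an enumeration of $V$ and a hypercyclic vector $x$. For each $k$ we pick $n_k$ such that $B^{n_k}x$ approximates $\sum_{j\le k}e_{v_j}$ within $\varepsilon_k$, with $x$ close to $0$ in a sense arranged by transitivity. More precisely, transitivity gives $z$ with $\|z\|<\varepsilon_k$ and $\|B^{n_k}z-\sum_{j\le k}e_{v_j}\|<\varepsilon_k$, so that $(B^{n_k}z)(v)=\sum_{u\in\chil^{n_k}(v)}z(u)\approx 1$. Since $|z(u)|\,\mu_u$ summed is less than $\varepsilon_k$, we obtain
\[
1\lesssim\sum_{u\in\chil^{n_k}(v)}|z(u)|\le \frac{\varepsilon_k}{\inf_{u\in\chil^{n_k}(v)}\mu_u},
\]
which forces the infimum to be at most about $\varepsilon_k$ for $j\le k$. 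In the unrooted case we additionally use that $\|z\|$ small and $B^{n_k}z\approx e_v$ control $\pare^{n_k}(v)$. Write $p=\pare^{n_k}(v)$. Since $B^{n_k}z\approx e_v$, the mass of $z$ on $\chil^{n_k}(p)$ must split so that it sums to about $1$ on $\chil^{n_k}(v)$ but, $p$ being an ancestor, $(B^{2n_k}z)(p)$ relates to $(B^{n_k}z)(\pare^{n_k}(p))$. Applying transitivity to the target pair ($B^{n_k}z\approx e_v$, where the coefficient at $p$ of the target is $0$ unless $p$ is in the support) gives either $\mu_p$ small, via $\|e_v\|$ closeness comparisons, or small weights in $\chil^{n_k}(p)$. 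The main obstacle is exactly this unrooted necessity step: extracting the $\min$ condition requires choosing targets involving both $e_v$ and $e_{\pare^{n_k}(v)}$, and tracking cancellations among the descendants of $\pare^{n_k}(v)$. I would first try the target $y=\sum_{j\le k}e_{v_j}$ together with the initial vector $x=\sum_{j\le k}e_{v_j}$ instead of $0$. This gives $B^{n_k}(x-z)\approx 0$ near $\pare^{n_k}(v)$, and a sign or mass argument then shows that either $\mu_{\pare^{n_k}(v)}$ or the infimum over $\chil^{n_k}(\pare^{n_k}(v))$ is small.
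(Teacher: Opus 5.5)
Your sufficiency directions are sound. Replacing $e_v$ by $e_v-e_w$, with $w\in\chil^{n_k}(\pare^{n_k}(v))$ of small weight, is exactly the mechanism of Theorem~\ref{thm:aim-WHC} on $M_V$. Your worries about $w=v$ and shared ancestors are moot: $\mu_v>0$ is fixed, and the perturbations are chosen per basis vector and per $k$, then extended linearly. The rooted necessity, via transitivity from the $\varepsilon_k$-ball around $0$ to the $\varepsilon_k$-ball around $\sum_{j\le k}e_{v_j}$, is also sound.

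The genuine gap is the unrooted necessity, which you leave as a tentative plan that fails as stated, for two reasons.
First, the condition requires \emph{one} sequence $(n_k)$ for both limits. You obtain the first limit from the pair $(0,\sum_{j\le k}e_{v_j})$ and intend to obtain the second from the pair $(x,x)$ with $x=\sum_{j\le k}e_{v_j}$. Two separate uses of transitivity give unrelated return times.
Second, the pair $(x,x)$ with all coefficients equal to $1$ gives no information in general. Suppose $p=\pare^{n_k}(v)$ lies in $F_k=\{v_1,\dots,v_k\}$ and $v$ is the only point of $F_k$ in $\chil^{n_k}(p)$. Then the required value $(B^{n_k}z)(p)\approx 1$ is supplied by $z(v)\approx 1$ alone, and nothing forces $\mu_p$ or any other weight in $\chil^{n_k}(p)$ to be small. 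The same collapse destroys the first limit when $\chil^{n_k}(v)\cap F_k\neq\emptyset$.
Your claim that $B^{n_k}(x-z)\approx 0$ near $p$ is also wrong: $x-z$ is small only in the weighted norm, and its unweighted sums over $\chil^{n_k}(p)$ are exactly what is not controlled. Targets involving $e_{\pare^{n_k}(v)}$ are circular, since $n_k$ is produced only after the targets are fixed. With initial vector near $0$, the coordinate $z(v)$ is itself small, so nothing ties $p$ to $v$.

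The missing idea can be supplied in two ways.

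Your own route can be repaired. In an unrooted tree the ancestors $\pare^n(v)$, $n\ge 0$, are pairwise distinct, and return sets of a transitive operator are infinite. So you may take $n_k\in N(U_k,U_k)$, with $U_k$ the open $\gamma$-ball around $x$, so large that $\pare^{n_k}(F_k)\cap F_k=\emptyset$; hence $\chil^{n_k}(v)\cap F_k=\emptyset$ for $v\in F_k$. Then one $z$ with $\|z-x\|<\gamma$, $\|B^{n_k}z-x\|<\gamma$ and $\gamma\le\min_{v\in F_k}\mu_v/(4k)$ gives both limits:
\begin{itemize}
\item $\sum_{u\in\chil^{n_k}(v)}|z(u)|\ge 3/4$ with all these $u\notin F_k$, so the infimum over $\chil^{n_k}(v)$ is $<2\gamma$.
\item At $p\notin F_k$ one has $|(B^{n_k}z)(p)|\mu_p<\gamma$, while $\sum_{u\in\chil^{n_k}(p)\cap F_k}z(u)\ge 3/4$. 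So either $\mu_p<4\gamma$, or $|\sum_{u\in\chil^{n_k}(p)\setminus F_k}z(u)|\ge 1/2$, which forces a weight $<2\gamma$ in $\chil^{n_k}(p)$.
\end{itemize}

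The paper only quotes this theorem from \cite{GEPa23}, but its own machinery recovers it by a different device. $M_V$ is countable and radially discrete, so Theorem~\ref{hypdeltas} applies. Its proof makes a single transitivity step from $\sum_j 3^j\delta_{x_j}$ to $\sum_j 3^{k+j}\delta_{x_j}$. The geometric coefficients keep every nonempty subset sum of $(a_j)_{j\le k}$ more than $2$ away from $0$ and from each $3^{k+j}$. Hence the mass needed at $x_j$ and at $f^m(x_j)$ must come from points of small norm, whatever $f^m$ does on $F_k$. No separation of $f^m(F_k)$ from $F_k$ is needed, and such separation is unavailable for maps with cycles or fixed points. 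The equivalent formulation of the targeting property proved at the end of the subsection on asymptotically uniformly discrete spaces then translates the result on $M_V$ into exactly the stated $\min$-condition.
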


\section{Blow-up/collapse behaviour}

Throughout this section we assume that $T\colon X\to X$ is a linearization on a topological vector space $X$ of a continuous map $f\colon K\to K$  on a Hausdorff topological space $K$ with certain fixed point $x_0\in K$. In the Lipschitz-free context, the space $K$ will be our metric space $M$ and  $X=\mathcal{F}(M)$.

There are several criteria (sufficient conditions) for hypercyclicity/transitivity of linear operators (see, e.g. \cite{GEP11}). All the versions involve a blow-up/collapse around $0$. We recall that a linear operator $T$ is said to be \textit{weakly mixing} if the product operator $T\times T$ is topologically transitive. We will recall the following general formulation: 

\begin{theorem}[Hypercyclicity Criterion]\label{thm:HC}
Let $X$ be a topological vector space, $T\in \mathcal{L}(X)$ and $(n_k)$ an increasing sequence.
If there exist dense subsets $X_0,X_1$ in $X$ such that
\begin{enumerate}
\item for each $x\in X_0$ there exists a sequence $(x_k)_k\in X^{\mathbb{N}}$ such that $\lim_{k\to \infty} x_k=x$ and $\lim_k T^{n_k}x_{k}=0$,
\item for each $y\in X_1$ there exists a sequence $(y_k)_k\in X^{\mathbb{N}}$ such that $\lim_{k\to \infty} y_k=0$ and $\lim_k T^{n_k}y_{k}=y$,
\end{enumerate}
then $T$ is weakly mixing. Moreover, if $T$ satisfies the Hypercyclicity Criterion with respect to the full sequence $(k)$ then $T$ is mixing.
\end{theorem}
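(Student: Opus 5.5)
To show that $T$ is weakly mixing, I will prove that $T\times T$ is topologically transitive on $X\times X$. Fix nonempty open sets $U_1,U_2,V_1,V_2\subset X$; the task is to find a single $n$ with $T^n(U_i)\cap V_i\neq\emptyset$ for $i=1,2$. Since $X_0$ and $X_1$ are dense, I choose $x^{(i)}\in X_0\cap U_i$ and $y^{(i)}\in X_1\cap V_i$. Hypotheses (1) and (2) then provide sequences $x^{(i)}_k\to x^{(i)}$ with $T^{n_k}x^{(i)}_k\to 0$, and $y^{(i)}_k\to 0$ with $T^{n_k}y^{(i)}_k\to y^{(i)}$.

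The candidate points are
\[
z^{(i)}_k:=x^{(i)}_k+y^{(i)}_k .
\]
Continuity of addition in the topological vector space gives $z^{(i)}_k\to x^{(i)}+0=x^{(i)}$. By linearity of $T^{n_k}$,
\[
T^{n_k}z^{(i)}_k=T^{n_k}x^{(i)}_k+T^{n_k}y^{(i)}_k\to 0+y^{(i)}=y^{(i)}.
\]
So for all sufficiently large $k$ we have $z^{(i)}_k\in U_i$ and $T^{n_k}z^{(i)}_k\in V_i$, for both $i=1,2$ at once. Hence $(T\times T)^{n_k}(U_1\times U_2)\cap(V_1\times V_2)\neq\emptyset$.

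Because products of open sets form a base of $X\times X$, this shows that $T\times T$ is transitive, so $T$ is weakly mixing. The key point is that hypotheses (1) and (2) are indexed by the same sequence $(n_k)$, so a single $k$ works for both coordinates. No Baire argument is needed, which matters since $X$ is only a topological vector space. The same argument also shows that for each pair $(U_i,V_i)$ the hitting condition holds for all large $k$.

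For the mixing statement, take $(n_k)=(k)$. The computation above shows that $T^k(U)\cap V\neq\emptyset$ for all $k\ge k_0$, for any nonempty open $U,V$, and this is exactly the definition of mixing.

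There is no real obstacle here. The only care needed is to invoke only sequential convergence and the continuity of the vector operations, and never metrizability. The one point to state explicitly is that a convergent sequence eventually lies in every neighbourhood of its limit, which holds in any topological space.
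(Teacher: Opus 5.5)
Your proof is correct. Note that the paper does not prove this statement: it is recalled as a known criterion, with a pointer to \cite{GEP11}, so there is no in-paper argument to match. Your argument is the standard one.

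The vectors $z_k=x_k+y_k$ converge to $x$, and $T^{n_k}z_k$ converges to $y$, so $n_k\in N(U,V)$ for \emph{all} sufficiently large $k$. This eventual conclusion along $(n_k)$ is what lets a single index serve both coordinates of $T\times T$, and in fact any finite direct sum $T\oplus\cdots\oplus T$. It also gives mixing directly when $n_k=k$.

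It is worth contrasting this with how the paper proves the analogous Theorem~\ref{thm:aim-WHC}. There the authors verify weak mixing through the characterization $N(U,W)\cap N(W,V)\neq\emptyset$ for all nonempty open $U,V$ and all $0$-neighbourhoods $W$, which is itself a nontrivial result imported from the literature. Your direct route needs no such black box. It uses only additivity of $T^{n_k}$ and continuity of addition in $X$, not even continuity of $T$, and it makes clear that no Baire-category argument or metrizability is involved.
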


If $T\colon X\to X$ is a linearization of a map $f$ then a natural way to get that $T$ satisfies the Hypercyclicity Criterion is to require that $f$ has a {blow-up/collapse} property around the fixed point $x_0$ such that $j(x_0)=0$.

\begin{definition}\label{BUChcnl}
We say that a continuous map $f\colon K\to K$ on a topological space $K$ with $x_0\in K$ so that $f(x_0)=x_0$ has a \textbf{blow-up/collapse} property around $x_0$ with respect to a strictly increasing sequence $(n_k)_k\in \N^\N$ if there exist $K_0,K_1\subset K$ dense subsets such that
\begin{enumerate}
\item[(i)] for each $x\in K_0$, there exists a sequence $(x_k)_k\in K^{\N}$ such that $\lim_{k\to\infty} x_k=x$ and $\lim_{k\to\infty} f^{n_k}(x_k)=x_0$. 

\item[(ii)] for each $y\in K_1$ there exists a sequence $(y_k)_k\in K^\N$ such that $\lim_{k\to\infty} y_k=x_0$ and $\lim_{k\to\infty} f^{n_k}(y_k)=y$. 
\end{enumerate}
\end{definition}

\begin{examples}\label{exa1}
We recall that a map $f\colon K\to K$ is \emph{locally eventually onto} (sometimes called \textit{topologically exact}) if for any non-empty open set $U\subset K$, there exists $n\in\N$ such that $f^n(U)=K$. If, moreover, $f$ admits a fixed point $x_0$ then $f$ trivially satisfies the blow-up/collapse property around $x_0$ with respect to the full sequence $(k)_k\subset \NN$. This is the case for:
\begin{enumerate}
    \item the tent map $t\colon [0,1]\to [0,1]$ given by $t(x)=2x$ for $x\in [0,1/2]$, $t(x)=2-2x$ for $x\in [1/2,1]$;
    \item  the logistic map $l_4\colon[0,1]\to [0,1]$ defined by $l_4(x)=4x(1-x)$.
\end{enumerate}
Actually, much less is needed: If $f(x_0)=x_0$ is so that for every neighbourhood $U$ of $x_0$ we have that there is $n\in\NN$ with $f^n(U)=K$ and $\bigcup_{n\in\NN} f^{-n}(\{x_0\})$ is dense in $K$, we easily have that $f$ satisfies the blow-up/collapse property around $x_0$ with respect to the full sequence $(k)_k$. For instance, the anti-symmetric double tent map $h\colon[-1,1]\to [-1,1]$ defined by $h(x)=t(x)$ for $x\in [0,1]$ and $h(x)=-t(-x)$ for $x\in [-1,0]$ satisfies this property but $h$ is not even transitive (see Example 4.6 in \cite{ACP21}). 
\end{examples}

The blow-up/collapse property above is more general than the Hypercyclicity Criterion for Lipschitz-free operators given in \cite{ACP21}. 

\begin{definition}(\cite[Definition 3.2]{ACP21})
   Let $M$ be a pointed metric space and $f\in \lip_0(M,M)$. It is said that $f$ satisfies the \textbf{HCL} (Hypercyclicity Criterion for Lipschitz-free operators) \textbf{around a fixed point} $x_0$ if there exists an increasing sequence $(n_k)_k$, two dense sets $M_0,M_1\subset M$ and a sequence of maps $g_{n_k}\colon M_1 \to M$ such that 
\begin{enumerate}
    \item $f^{n_k}(x)\xrightarrow{k\to \infty}x_0$ for every $x\in M_0$;
    \item $g_{n_k}(y)\xrightarrow{k\to \infty}x_0$ for every $y\in M_1$;
     \item $(f^{n_k}\circ g_{n_k})(y)\xrightarrow{k\to \infty}y$ for every $y\in M_1$.
\end{enumerate}
\end{definition}

Indeed, given a metric space $M$ and a map $f\in \lip_0(M,M)$ with a fixed point $x_0$, considering the inverse mappings $g_{n_k}\colon M_1\to M_1,$ defined by $g_{n_k}(y):=y_k$ for every $y\in K_1$, then $f$ satisfying the HCL around $x_0$ clearly implies having the blow-up/collapse property around $x_0$. 
It is thus not surprising that we get the following result.

\begin{proposition}
Let $T\colon X\to X$ be a linearization on a topological vector space $X$ of a continuous map $f\colon K\to K$ with a fixed point $x_0$. If $f$ has a blow-up/collapse property around $x_0$ with respect to $(n_k)_k$ then $T$ satisfies the Hypercyclicity Criterion with respect to $(n_k)_k$ and, in particular, $T$ is weakly mixing (respectively, weakly mixing and hypercyclic if $X$ is completely metrizable and separable). Moreover, if $(n_k)_k=(k)_k$, $T$ is also mixing. 
\end{proposition}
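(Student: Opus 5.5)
The idea is to transport the blow-up/collapse property of $f$ to the Hypercyclicity Criterion (Theorem~\ref{thm:HC}) for $T$ via the injection $j$. We build the two dense sets from linear combinations of images of the dense sets $K_0$ and $K_1$. Set
\[
X_0:=\spa(j(K_0)),\qquad X_1:=\spa(j(K_1)).
\]

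\textbf{Density of $X_0$ and $X_1$.} Since $K_0$ is dense in $K$ and $j$ is continuous, $j(K_0)$ is dense in $j(K)$. Finite linear combinations are continuous in a topological vector space, so $\overline{\spa(j(K_0))}\supset \spa(j(K))$. Because $\spa(j(K))$ is dense in $X$ by the definition of a linearization, $X_0$ is dense in $X$. The same argument applies to $X_1$.

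\textbf{Collapse condition.} Take $x=\sum_{i=1}^m a_i j(z_i)$ with $z_i\in K_0$. For each $i$, the blow-up/collapse property gives $z_{i,k}\to z_i$ with $f^{n_k}(z_{i,k})\to x_0$. Put $x_k:=\sum_i a_i j(z_{i,k})$. Continuity of $j$ gives $x_k\to x$. Using $T^{n}\circ j=j\circ f^{n}$, we get
\[
T^{n_k}x_k=\sum_i a_i\, j\bigl(f^{n_k}(z_{i,k})\bigr)\to \sum_i a_i\, j(x_0)=0,
\]
since $j(x_0)=0$.

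\textbf{Blow-up condition.} Take $y=\sum_i b_i j(w_i)$ with $w_i\in K_1$. Choose $w_{i,k}\to x_0$ with $f^{n_k}(w_{i,k})\to w_i$, and set $y_k:=\sum_i b_i j(w_{i,k})$. Then $y_k\to 0$ and $T^{n_k}y_k\to y$. Theorem~\ref{thm:HC} now shows that $T$ is weakly mixing, and mixing when $(n_k)_k=(k)_k$.

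\textbf{Hypercyclicity.} When $X$ is completely metrizable and separable, weak mixing (transitivity of $T\times T$) implies that $T$ is topologically transitive. Birkhoff's transitivity theorem then gives hypercyclicity.

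The only delicate point is that the approximating sequences must be built separately for each point $z_i$ and then combined linearly. This works because only finitely many terms appear and both addition and scalar multiplication are continuous. So I expect no real obstacle beyond bookkeeping the identity $j(x_0)=0$ and the density of $X_0$ and $X_1$.
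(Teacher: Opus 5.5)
Your proof is correct and follows the same route as the paper. The paper also sets $X_0=\spa(j(K_0))$ and $X_1=\spa(j(K_1))$ and invokes linearity of $T$ together with density of $\spa(j(K))$ to obtain the Hypercyclicity Criterion, followed by the standard Baire/Birkhoff step in the separable $F$-space case; you spell out the density, collapse and blow-up verifications that the paper leaves implicit.
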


\begin{proof}
    Given $K_0,K_1\subset K$ dense subsets associated to the blow-up/collapse property of $f$ around $x_0$, we define $X_0=\spa (j(K_0))$ and $X_1=\spa (j(K_1))$. By linearity of $T$ and density in $X$ of $\spa (j(K))$, we immediately get that $T$ satisfies the Hypercyclicity Criterion stated in Theorem~\ref{thm:HC}. We then obtain that $T$ is weakly mixing. In the particular case where $X$ is completely metrizable (i.e., an $F$-space) and separable, we have that $T$ is hypercyclic. The mixing property when $(n_k)_k=(k)_k$ is obviously obtained. 
\end{proof}

Unfortunately, the blow-up/collapse property of $f$ around $x_0$ is too strong to characterize the weak mixing nature of the linearization $T_f$. Here are two important examples: 

\begin{examples}\label{mainexamples}
\begin{enumerate}
\item  By following the notation of Subsection \ref{subsec:shifts-M}, we consider $V=\mathbb{Z}\times \mathbb{Z}^+$ and the map
\[\pare((m,n))=\begin{cases}
(m,n-1) &\text{if $n\ge 1$}\\
(m-1,0) &\text{if $n=0$}.
\end{cases}.\]
If we let $\mu_{(m,n)}=2^{-n}$ then the map $f$ defined on $M_V=V\cup \{0\}$ by $f(v)=\pare(v)$ if $v\in V$ and $f(0)=0$ belongs to $\lip_0(M_V,M_V)$. The Lipschitz-free operator $T_f$ is then the backward shift on $\ell^1(V,\mu)$ and is weakly mixing by Theorem~\ref{caracshift}. However, $f$ fails the blow-up/collapse property around $0$ as condition (i) of Definition \ref{BUChcnl} is not satisfied. Indeed, no orbit of a non-zero point under the action of $f$ goes to $0$.

\begin{figure}[h!]\label{fig:ring-tree}
\centering
\includegraphics[width=0.7\textwidth]{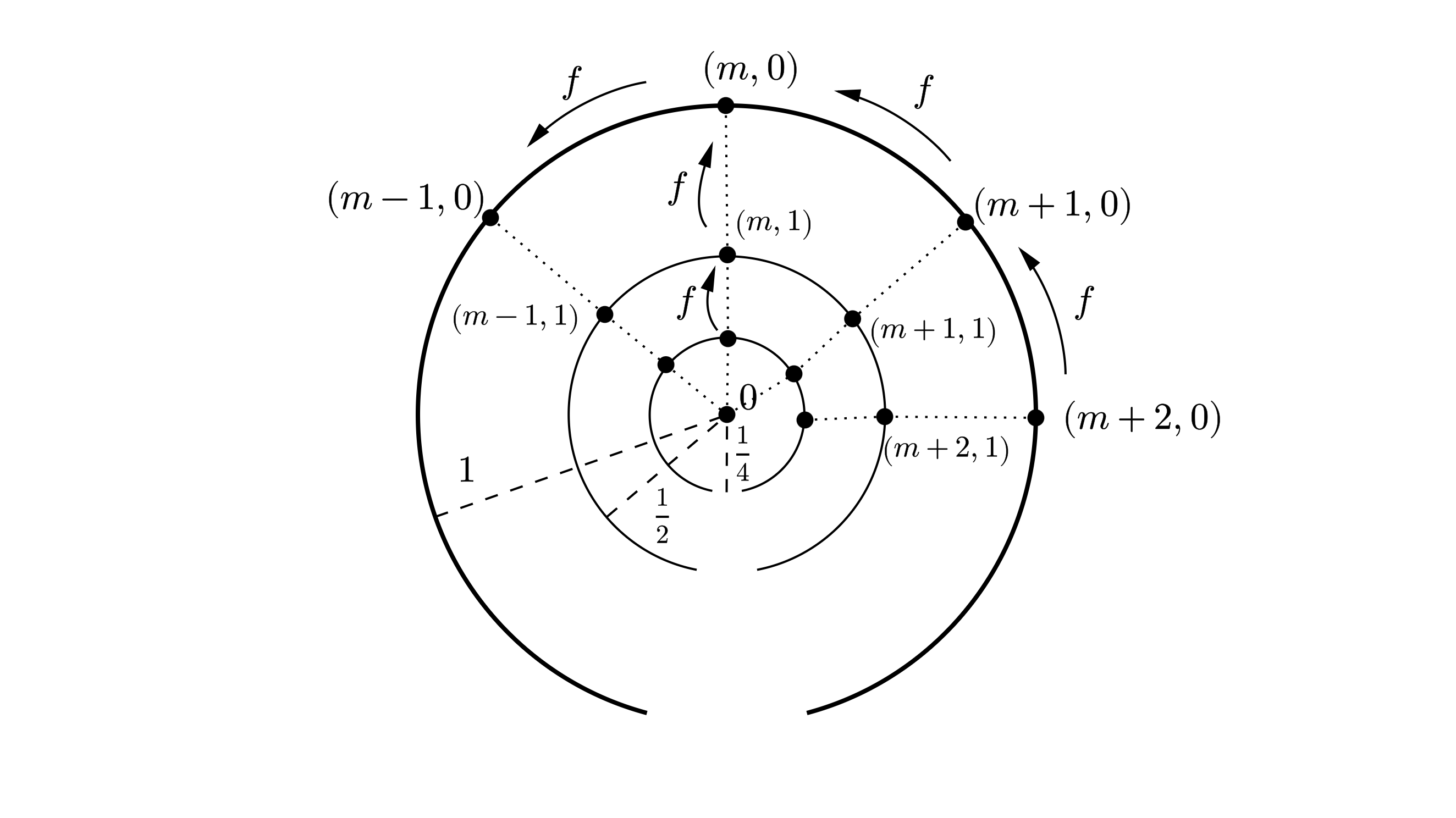}
\caption{Dynamical system $(M_V,f)$ not having the blow-up/collapse property around $0$, but its linearization $T_f$ being weakly mixing.}
\end{figure}

\item The second example, already used in \cite[Example 3.5]{ACP21}, is given by the map $f(0)=0$, $f(1/n)=1/(n-1)$ and $f(1)=1$ on the metric space $M=\{0\}\cup\{1/n:n\ge 1\}$, that has two fixed points ($0$ and $1$). It has been proven in \cite[Example 3.5]{ACP21} that $T_f$ is weakly mixing. However $f$ does not have the blow-up/collapse property around either of its fixed points. Indeed, $f$ fails the blow-up/collapse property around $0$ as condition (i) of Definition \ref{BUChcnl} is not satisfied, since every orbit (except the orbit of $0$) is tending to $1$.  Analogously, $f$ fails the blow-up collapse behaviour around $1$, as it fails condition (ii). Since $1$ is an isolated point, a sequence $(y_k)_k$  tending to $1$ is eventually equals to $1$ and thus we always have $\lim_{k\to\infty} f^{n_k}(y_k)=1$.   \\
It should be noted that in \cite[Remark 3.6]{ACP21}, it was stated that the function $f$ above satisfied the HCL for the distinguished point $1\in M$, which turns out to be not true as mentioned above.
\end{enumerate}
\end{examples}

In view of these two examples, we would like to weaken the blow-up/collapse property.

 \section{The targeting property}

 Inspired by the characterization stated in Theorem~\ref{caracshift} for the unrooted tree, we introduce in this section a new condition, called the {targeting property}, that will allow us this time to cover both examples in Examples~\ref{mainexamples}. Moreover, we will then show that on the metric spaces considered in these examples, the targeting property of $f$ characterizes the hypercyclicity of the linearization $T_f$. As a by-product, we will get that these linearizations $T_f$ are hypercyclic if and only if they are weakly mixing.

The characterization stated in Theorem~\ref{caracshift} for the unrooted tree relies on the existence of an increasing sequence $(n_k)_k$ of positive integers such that, for each $v\in V$,
\[\inf_{u\in \text{Chi}^{n_k}(v)}\mu_u\xrightarrow[k\to +\infty]{} 0 \quad \text{and}\quad \min\left(\mu_{\pare^{n_k}(v)},\inf_{u\in \text{Chi}^{n_k}(\pare^{n_k}(v))}|\mu_u|\right)\xrightarrow[k\to +\infty]{} 0.\]
In particular, for the backward shift on trees considered in Examples~\ref{mainexamples}, the hypercyclicity comes from the fact that, for each $v\in V$,
\[\inf_{u\in \text{Chi}^{k}(v)}\mu_u \xrightarrow[k\to +\infty]{} 0 \quad \text{and}\quad \inf_{u\in \text{Chi}^{k}(\pare^{k}(v))}|\mu_u| \xrightarrow[k\to +\infty]{} 0,\]
while $\mu_{\pare^{k}(v)}$ is ultimately equal to $1$. In fact, in the characterization of hypercyclicity for the weighted shifts on an unrooted tree, the convergence of $(\mu_{\pare^{n_k}(v)})_k$ to $0$ allows to say that the sequence $(B^{n_k}e_v)_k$ tends to $0$ while the convergence of $(\inf_{u\in \text{Chi}^{n_k}(\pare^{n_k}(v))}|\mu_u|)_k$
 to $0$ allows to find a sequence $(v_k)_k$ such that $(\mu_{v_k})_k$ tends to $0$ and $B^{n_k}(e_v-e_{v_k})=0$. In other words, it is not required that $e_v$ collapses around $0$. Instead, we can require the weaker condition of being eventually arbitrarily close to orbits coming from points close to $0$. Analogously, one can weaken the blow-up condition too.  This is the idea behind the targeting property stated below. We will define it in the general context of uniform spaces  since this is the natural class that contains the cases in which we are mostly interested, namely metric spaces and subsets of topological vector spaces. 

\begin{definition}\label{def:targeting}
We say that a continuous map $f\colon Y\to Y$ on a uniform  space $(Y,\mathcal{U})$ with $x_0\in Y$ so that $f(x_0)=x_0$ has the \textbf{targeting property} around $x_0$ if, for every $U\in \mathcal{U}$ in the uniformity, and for each finite family $F\subset Y$, we find $n\in \NN$ such that, for every $x,y\in F$,  
\begin{enumerate}
\item[(i)]  there exist $u,u_0\in Y$ with 
\[
\{(u,x),(u_0,x_0),(f^n(u),f^n(u_0))\} \subset U, \ \mbox{ and}
\]
\item[(ii)] there exist $v,v_0\in Y$ with 
\[
\{(v,v_0),(f^n(v),y),(f^n(v_0),x_0)\} \subset U.  
\] 
\end{enumerate}
If the set of $n\in\NN$ satisfying the above conditions is co-finite, the we say that $f$ satisfies the \textbf{strict targeting property}. 
\end{definition}

\begin{remark} The two main cases in which we are interested to apply the targeting property are the following: 
\begin{enumerate}
\item[(a)] If $Y$ is a subset of a topological vector space $X$, with the inherited topology, then a continuous map $f\colon Y\to Y$ that fixes $0$ has the targeting property around $0$ if, and only if, for every $0$-neighbourhood $U$ in $X$ and for each finite family $F\subset Y$, we find $n\in \NN$ such that, for every $x,y\in F$,  
\begin{enumerate}
\item[(i)]  there exist $u,u_0\in Y$ with 
\[
u\in x+U, \ \ u_0\in U, \ \ f^n(u)- f^n(u_0)\in U, \ \mbox{ and}
\]
\item[(ii)] there exist $v,v_0\in Y$ with 
\[
v-v_0\in U, \ \ f^n(v)\in y+U, \ \ f^n(v_0)\in  U.  
\] 
\end{enumerate}

\item[(b)] When $Y$ is a metric space $(M,d)$ the targeting property is equivalent to: For any $\varepsilon>0$, and for each finite family $F\subset M$, we find $n\in \NN$ such that, for every $x,y\in F$,   
\begin{enumerate}
\item[(i)]  there exist $u,u_0\in M$ with 
\[
\max \{d(u,x),d(u_0,x_0),d(f^n(u),f^n(u_0))\} <\varepsilon, \ \mbox{ and}
\]
\item[(ii)] there exist $v,v_0\in M$ with 
\[
\max \{d(v,v_0),d(f^n(v),y),d(f^n(v_0),x_0)\} <\varepsilon.  
\] 
\end{enumerate}
    If, moreover, $(M,d)$ is separable, then $f$ has the targeting property around $x_0$ if, and only if, there exist a dense subset $M_0\subset M$ and an increasing sequence $(n_k)_k$ of integers such that
    \begin{enumerate}
\item[(i)]  for each $x\in M_0$ there exist sequences $(x_k)_k$ and $(z_k)_k$ in $M^\NN$  with $\lim_k x_k=x$, $\lim_kz_k=x_0$, $\lim_k d(f^{n_k}(x_k),f^{n_k}(z_k))=0$, and 
\item[(ii)] for each $y\in M_0$ there exist sequences $(y_k)_k$ and $(w_k)_k$ in $M^\NN$  with $\lim_k d(y_k,w_k)=0$,  $\lim_kf^{n_k}(y_k)=y$, and $\lim_k f^{n_k}(w_k)=x_0$. 
\end{enumerate}
Indeed, the above property clearly implies the targeting property around $x_0$. For the converse, let $M_0\subset M$ be a countable dense subset. We write $M_0$ as a countable union of an increasing sequence of finite sets $M_0=\bigcup_k F_k$. By the targeting property, let $(n_k)_k$ be a sequence of integers (which we assume increasing) associated to the sequence of finite sets $(F_k)_k$ and the sequence of distances $(1/k)_k$. Given $x\in M_0$, we find $k_0\in\NN$ so that $x\in F_{k_0}$ and, by condition (i), there exist $u(k),u_0(k)\in M$ with 
\[
\max \{d(u(k),x),d(u_0(k),x_0),d(f^{n_k}(u(k)),f^{n_k}(u_0(k)))\} <1/k, \ k\geq k_0.
\]
By setting $x_k=u(k)$ and $z_k=u_0(k)$ we obtain our new condition (i). Analogously, given $y\in M_0$ one can find the sequences $(y_k)_k$ and $(w_k)_k$ so that the new condition (ii) is satisfied.\\
We can also remark that in this context, $f$ has the targeting property around $x_0$ if and only if there exist two dense subsets $M_0,M_1\subset M$ and an increasing sequence $(n_k)_k$ of integers such that (i) holds for each $x\in M_0$ and (ii) holds for each $y\in M_1$.

\end{enumerate}
\end{remark}

We notice that the two dynamical systems considered in Examples~\ref{mainexamples} have the strict targeting property. For instance, the dynamical system given by $f(0)=0$, $f(1/n)=1/(n-1)$ and $f(1)=1$ on $M=\{0\}\cup\{1/n:n\ge 1\}$ satisfies the targeting property around $0$ with respect to the full sequence $(k)_k$ if we consider $M_0=M\backslash\{0\}$ with $x_k=x$, $z_k=1/k$, $y_k=f^{-k}(y)$ if $y\neq 1$ (otherwise, $y_k=1/k$), and  $w_k=0$. For the dynamical system  defined on $M=\ZZ\times\ZZ^+\cup\{0\}$, given $x\in M_0:=M\setminus \{0\}$, $x=(m,n)$, we set $x_k=x$ and $z_k=(m+n-k,k)$, $k\in\NN$. It is clear that $f^k(x_k)=f^k(z_k)$ for $k\geq n$, and condition (i) is satisfied. Given $y\in M_0$, we consider $y_k=y+(0,k)$ and $w_k=0$, $k\in\NN$, and (ii) is clearly fulfilled. These examples have in common that the part of the collapse fails around the base point, but the blow-up is satisfied. One can also construct an example where both, the collapse and the blow-up, fail around the base point, but the dynamical system has the targeting property. 

\begin{example}\label{exnoblow}
    Let $M:=\{0,1\}\cup\{1/n \ : \ n>2\}\cup \{n/(n+1) \ : \ n>1\}$, with the usual distance, and let $f:M\to M$ be defined as $f(0)=0$, $f(1)=1$, $f(1/(n+1))=1/n$, $n>2$, $f(1/3)=1$, $f(n/(n+1))=(n-1)/n$, $n>2$, and $f(2/3)=0$. We easily observe that the orbits of points $0<x\leq 1/3$ are eventually $1$, and the orbits of points $2/3\leq y<1$ are eventually $0$, so none of the two fixed points of $f$ has either collapse or blow-up around it. On the other hand, this system has the targeting property around $0$ (or around $1$). We set $M_0=M\setminus \{0,1\}$. If $x\in M_0\cap [0,1/3]$, then we select $x_k=x$ and $z_k=1/(k+2)$, $k\in\NN$, and if $x\in M_0\cap [2/3,1]$, then we select $x_k=x$ and $z_k=0$, $k\in\NN$, to obtain condition (i). For the second condition, if $y\in M_0\cap [0,1/3]$, then $y_k:=f^{-k}(y)$, $w_k:=0$, $k\in\NN$, and if $y\in M_0\cap [2/3,1]$, then $y_k:=f^{-k}(y)$, $w_k:=(k+1)/(k+2)$, $k\in\NN$, which yields the targeting property. 
\end{example}

Going back to the logistic map, which is part of the logistic family, we obtain new interesting examples. 

\begin{example}\label{logisticf}
    Let $p_\lambda\colon [0,1]\to [0,1]$ be defined as $p_\lambda (x)=\lambda x(1-x)$, $0\leq \lambda\leq 4$. 
    If $\lambda>1$ one finds a countable subset $M_0\subset [0,1]$ such that, for $M:=\{0\}\cup \{x_\lambda\}\cup M_0$, we have $p_\lambda (M)\subset M$, $f:=p_\lambda|_M$ satisfies the targeting property with respect to the full sequence $(k)_k$. Indeed, for $1<\lambda<3$ we have that $0$ is a repelling fixed point and $x_\lambda:=(\lambda-1)/\lambda$ is an attractive fixed point. By taking any point $0<y<x_\lambda$, the set $M_0$ consisting of the forward orbit of $y$, that converges to $x_\lambda$, and a backward orbit of $y$ converging to $0$ yields the targeting property. In case that $3\leq \lambda\leq 4$, we just take as $M_0$ a backward orbit of $x_\lambda$ converging to $0$. We easily have that the operator given by its {Carleman linearization} $S_\lambda \colon X \to X$, $X=\{ (x_i)_i\in\R^\N \ ; \ \exists r>0 \mbox{ with } \sup_i\abs{x_i}r^i<\infty \}$, is given by $S_{\lambda}(x_1,x_2,\dots )=(y_k)_{k\ge 1}$ with
$$
y_k=\lambda^k\sum_{j=0}^k (-1)^j\binom{k}{j}x_{j+k}, \ \ k\in \N. 
$$
\end{example}

The targeting property is strictly weaker than the blow-up/collapse property. However, assuming that $f$ has the targeting property and the linearization $T$ is so that the embedding $j$ is uniformly continuous will be enough to ensure  that $T$ is weakly mixing.

\begin{theorem}  \label{thm:aim-WHC}
Let $T\colon X\to X$ be a linearization on a topological vector space $X$ of a continuous map $f\colon Y\to Y$ on a uniform space $Y$ with fixed point $x_0$ such that the embedding $j\colon Y\to X$ is uniformly continuous. We have: 
\begin{enumerate}
    \item If $f$ has the targeting property around $x_0$ then $T$ is weakly mixing.
    \item If $f$ has the strict targeting property anon-empty open sets round $x_0$ th en $T$ is mixing.
\end{enumerate}
\end{theorem}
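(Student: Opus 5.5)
Weak mixing of $T$ amounts to this: for any non-empty open sets $A_1,A_2,B_1,B_2\subset X$ there is one $n$ with $T^n(A_i)\cap B_i\neq\emptyset$ for $i=1,2$. Mixing asks the same for all large $n$. The plan is to verify this directly, using finite linear combinations of points of $j(Y)$ and the targeting property applied to one finite family. By density of $\spa(j(Y))$ we pick
\[
a_i=\sum_{l}\alpha_{i,l}\, j(x_{i,l})\in A_i,\qquad b_i=\sum_{l}\beta_{i,l}\, j(y_{i,l})\in B_i .
\]
We then choose a balanced $0$-neighbourhood $W$ in $X$ with $a_i+W\subset A_i$ and $b_i+W\subset B_i$. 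Let $N$ bound the number of terms and the moduli of the coefficients. We take a balanced $0$-neighbourhood $W'$ such that any sum of $2N$ elements of $NW'$ lies in $W$; this uses continuity of addition and scalar multiplication iterated finitely often. Uniform continuity of $j$ then gives an entourage $U\in\mathcal U$ with $(p,q)\in U\Rightarrow j(p)-j(q)\in W'$.

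Let $F$ be the finite set of all $x_{i,l}$ and $y_{i,l}$. Apply the targeting property (Definition~\ref{def:targeting}) to $U$ and $F$, and obtain $n$. For each $x=x_{i,l}$, condition (i) gives $u,u_0$. For each $y=y_{i,l}$, condition (ii) gives $v,v_0$. We define
\[
\tilde a_i=\sum_l \alpha_{i,l}\bigl(j(u_{i,l})-j(u_{0,i,l})\bigr)+\sum_l\beta_{i,l}\bigl(j(v_{i,l})-j(v_{0,i,l})\bigr).
\]
Since $j(x_0)=0$, the first sum differs from $a_i$ by terms $j(u)-j(x)$ and $-(j(u_0)-j(x_0))$, each in $W'$. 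The second sum is a combination of $j(v)-j(v_0)\in W'$. Hence $\tilde a_i\in a_i+W\subset A_i$.

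Next apply $T^n$ and use $T^n\circ j=j\circ f^n$. The first sum becomes $\sum\alpha(j(f^nu)-j(f^nu_0))$, which lies in a small set by (i). The second becomes $\sum\beta\bigl(j(f^nv)-j(f^nv_0)\bigr)$, which is $b_i$ plus the errors $j(f^nv)-j(y)$ and $-(j(f^nv_0)-j(x_0))$, all in $W'$. So $T^n\tilde a_i\in b_i+W\subset B_i$. The same $n$ works for $i=1,2$, indeed for any finite number of pairs, because $F$ contains all the points. This gives weak mixing; in fact it gives transitivity of every finite product $T\times\dots\times T$.

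For (2), strict targeting makes the set of admissible $n$ cofinite for the fixed $U,F$. The same construction with a single pair $A,B$ then shows $T^n(A)\cap B\neq\emptyset$ for all large $n$, i.e.\ $T$ is mixing.

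The main obstacle is bookkeeping rather than a real difficulty. We must choose $W'$ so that finite sums of bounded multiples of $W'$ stay in $W$. This needs only continuity of the vector operations in a topological vector space, so no local convexity is required. We must also use only uniform continuity, not Lipschitz bounds, to pass from the entourage $U$ to $W'$. One subtle point is that the entourage condition must be applied to the pairs $(f^nu,f^nu_0)$, which are not points of $F$. This is exactly why uniform rather than mere continuity of $j$ is needed.
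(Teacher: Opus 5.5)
Your proof is correct. It differs from the paper's in how weak mixing is certified.

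The paper invokes the known characterization that an operator on a topological vector space is weakly mixing as soon as $N(U,W)\cap N(W,V)\neq\emptyset$ for all non-empty open $U,V$ and every $0$-neighbourhood $W$. It then uses condition (i) alone to build $\bar x\in U$ with $T^n\bar x\in W$, and condition (ii) alone to build $\bar y\in W$ with $T^n\bar y\in V$, for the single $n$ given by targeting.

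You instead add the two halves, $\tilde a_i=\bar x_i+\bar y_i$, so that one vector travels from $A_i$ to $B_i$. You apply targeting once to a finite set containing the points of both pairs, which verifies transitivity of $T\times T$ directly from the definition. The paper's route is shorter but rests on that external criterion for operators on general topological vector spaces. Yours is self-contained and gives transitivity of every finite product $T\times\cdots\times T$ at no extra cost. It also makes part (2), which the paper leaves to the reader, immediate: with the entourage and $F$ fixed, the same construction works for every $n$ in the cofinite set provided by strict targeting.

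One small bookkeeping point: your count of $2N$ error terms is right if $N$ bounds the total number of terms of $a_i$ and $b_i$ together. If $N$ bounds each separately, you need $3N$.
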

\begin{proof}
We will prove the weak mixing case. Let us recall that the return set from $U$ to $V$ is defined as $N(U,V):=\{n\in\N:T^n(U)\cap V\neq \emptyset\}$ and that $T$ will be weakly mixing provided that for every non-empty open sets $U,V\subset X$ and any $0$-neighbourhood $W$, we have $N(U,W)\cap N(W,V)\neq \emptyset$. Let us then take two non-empty open sets $U,V\subset X$ and a $0$-neighbourhood $W$.

We fix $x=\sum_{i=1}^ka_ij(x_i)\in U$ and $y=\sum_{i=1}^k b_ij(y_i)\in V$ with $F:=\bigcup_{i=1}^k\{x_i,y_i\}\subset Y$. We find now a $0$-neighbourhood $W_0\subset X$ such that 
\[
\sum_{i=1}^ka_i W_0\subset W, \ \ \ \sum_{i=1}^kb_i W_0\subset W,
\]
\[
x+\sum_{i=1}^ka_i (W_0-W_0)\subset U, \ \ y+\sum_{i=1}^kb_i (W_0-W_0)\subset V. 
\]
The uniform continuity of $j\colon Y\to X$ yields the existence of $A\in \mathcal{U}$, the uniformity of $Y$, such that $(j\times j)(A)\subset \{(z_1,z_2)\in X\times X \ ; \ z_1-z_2\in W_0\}$. By the targeting property of $f$ around $x_0$, there exists $n\in\NN$ associated to $A$ and $F$ such that 
\begin{enumerate}
\item[(i)]  there exist $u(i),u_0(i)\in Y$, $i=1, \dots,k$, with 
\[
\{(u(i),x_i),(u_0(i),x_0),(f^n(u(i)),f^n(u_0(i)))\} \subset A, \ \mbox{ and}
\]
\item[(ii)] there exist $v(i),v_0(i)\in Y$, $i=1, \dots,k$, with 
\[
\{(v(i),v_0(i)),(f^n(v(i)),y_i),(f^n(v_0(i)),x_0)\} \subset A. 
\]
\end{enumerate}
By the uniform continuity of $j$, the condition $\{(u(i),x_i),(u_0(i),x_0)\}\subset A$ implies that $j(u(i))-j(x_i)\in W_0$ and $j(u_0(i))-j(x_0)=j(u_0(i))\in W_0$. Therefore, $j(u(i))-j(x_i)-j(u_0(i))\in W_0-W_0$.

We set $\bar{x}=\sum_{i=1}^ka_i (j(u(i))-j(u_0(i)))$. By (i) we have 
\[
\bar{x}=x+\sum_{i=1}^ka_i (j(u(i))-j(x_i)-j(u_0(i)))\in x+\sum_{i=1}^ka_i (W_0-W_0)\subset U, 
\]
\[
T^n(\bar{x})=\sum_{i=1}^ka_i (j(f^n(u(i)))-j(f^n(u_0(i))))\in \sum_{i=1}^ka_i W_0\subset W. 
\]
We consider now $\bar{y}=\sum_{i=1}^kb_i (j(v(i))-j(v_0(i)))$. By (ii) we obtain
\[
\bar{y}=\sum_{i=1}^kb_i (j(v(i))-j(v_0(i)))\in \sum_{i=1}^kb_i W_0\subset W, 
\]
\[
T^n(\bar{y})=y+\sum_{i=1}^kb_i (j(f^n(v(i)))-j(y_j)-j(f^n(v_0(i))))\in \sum_{i=1}^kb_i (W_0-W_0)\subset V. 
\]
That is, $n\in N(U,W)\cap N(W,V)$, as we wanted to show. 
\end{proof}

As a consequence of this theorem, all the examples considered in \ref{mainexamples}, \ref{exnoblow} and \ref{logisticf} yield a mixing and sequentially hypercyclic linearization. 

Our next goal will be to prove that, for a natural family of metric spaces $M$, the \aim{} property is exactly the property characterizing hypercyclicity and weakly mixing for the associated Lipschitz-free operators.

\subsection{When $\mathcal{F}(M)$ admits a Schauder basis given by deltas}

A natural way to better understand a Lipschitz-free space is to identify a Schauder basis. Moreover, since a $T_f$ operator maps $\dd_x$ into $\dd_{f(x)}$, in our case it becomes of special interest to get a Schauder basis made of deltas. The condition for characterizing such spaces was already introduced in \cite{AlPer23}.

\begin{definition}[{\cite[Definition 6.2]{AlPer23}}] Let $(M,d)$ be a metric space, $x_0\in M$. We say that $M$ is \textbf{radially discrete} (with respect to $x_0$) if there exists $K\geq 1$ such that for every elements $x\neq y\in M$,
\[d(x,x_0)\leq Kd(x,y).\]
\end{definition}

The fact that this property is the one characterizing the existence of a Schauder basis of deltas can already be deduced from  \cite[Theorem 6.2]{AlPer23}. Below we provide a direct argument.

We can already remark that if a sequence $(\delta_{x_n})_{n\ge 1}$ is a Schauder basis of $\mathcal{F}(M)$ then the sequence $(x_n)_{n\ge 1}$ is dense in $M\backslash\{0\}$. Indeed, if there exists $a$ in $M\backslash\{0\}$ that does not belong to the closure of $(x_n)_{n\ge 1}$ then there exists $f_a\in \lip_0(M,M)$ such that $f_a(a)=1$ and $f_a(x_n)=0$ for every $n\ge 1$ so that for every series $\sum_{n=1}^{\infty}\aa_n \delta_{x_n}$, we have
\[\|\delta_a-\sum_{n=1}^{\infty}\aa_n \delta_{x_n}\|\ge \frac{1}{\text{Lip}(f_a)}>0.\]

Recall that a sequence $(\delta_{x_n})_{n\ge 1}$ is a Schauder basis of $\mathcal{F}(M)$ if and only if for every $\mu\in \mathcal{F}(M)$, there exists a unique sequence $(\aa_n)_{n\ge 1}$ such that $\mu= \sum_{n=1}^{\infty}\aa_n \delta_{x_n}$, or equivalently, if and only if
\begin{enumerate}
\item $\text{span}\{\delta_{x_n}:n\ge 1\}$ is dense in $\mathcal{F}(M)$;
\item there exists $K\ge 1$ such that for all $m,p\ge 1$, for any sequence $(\aa_n)_{n\ge 1}\subset \RR$,
\[K \|\sum_{n=1}^{m+p}\aa_n \delta_{x_n}\|\ge \|\sum_{n=1}^{m}\aa_n \delta_{x_n}\|.\]
\end{enumerate}
These conditions will give us the following equivalences.

\begin{proposition}\label{prop:deltas-basis}
    Let $(M,d)$ be a metric space with a distinguished point $0$ and $(x_n)_{n\geq 1}$ a dense set in $M\backslash\{0\}$. Then, the following are equivalent:
    \begin{itemize}
        \item[(i)] The sequence $(\delta_{x_n})$ is a Schauder basis for $\mathcal{F}(M)$;
        \item[(ii)] There exists $K\geq 1$ such that for every $n\ge 1$, for all $y\in M\backslash\{x_n\}$, 
        \[ d(x_n,0)\leq K d(x_n,y).\]
    \end{itemize}
\end{proposition}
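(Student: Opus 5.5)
We use the characterization of Schauder bases recalled just before the statement. Condition (1), density of $\operatorname{span}\{\delta_{x_n}\}$, holds automatically: $(x_n)$ is dense in $M\setminus\{0\}$ and $\delta$ is an isometry, so every $\delta_x$ is a limit of $\delta_{x_n}$'s. Hence (i) is equivalent to the uniform bound (2) on the partial-sum projections $P_m$. Throughout we use the duality $\|\mu\|=\sup\{\langle g,\mu\rangle : g\in\lip_0(M),\ \|g\|_{\lip}\le 1\}$.

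\emph{(i)$\Rightarrow$(ii).} Let $K$ be the basis constant. First take $y\in M\setminus\{x_n\}$ with $y\ne 0$. By density, approximate $y$ by $x_{m}$ with $m>n$ and $x_m\neq x_n$. Apply the projection $P_n$ to $\delta_{x_n}-\delta_{x_m}$; it returns $\delta_{x_n}$. This gives $d(x_n,0)=\|\delta_{x_n}\|\le K\|\delta_{x_n}-\delta_{x_m}\|=K d(x_n,x_m)$. Letting $x_m\to y$ yields $d(x_n,0)\le K d(x_n,y)$. If the approximating points have indices $m<n$ instead, we can still take $m>n$: only finitely many indices lie below $n$, and any point $y\ne x_n$ is either a limit of $x_m$ with $m$ large or equals some $x_m$ with $m<n$. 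We handle the latter case by applying $I-P_{m}$ instead, which is bounded by $K+1$; this changes the constant to $K+1$, which is harmless. The case $y=0$ is trivial with $K\ge1$.

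\emph{(ii)$\Rightarrow$(i).} This is the main step. Fix $\mu=\sum_{k=1}^{m+p}\alpha_k\delta_{x_k}$ and $g\in\lip_0(M)$ with $\|g\|_{\lip}\le1$. We want $h$ with $\|h\|_{\lip}\le C$ such that $h=g$ on $\{x_1,\dots,x_m\}$ and $h=0$ on $\{0,x_{m+1},\dots,x_{m+p}\}$. Then
\[
\langle g,P_m\mu\rangle=\langle h,\mu\rangle\le C\|\mu\|,
\]
which gives (2). Define $h$ on $A=\{0,x_1,\dots,x_{m+p}\}$ as just described, and extend it to $M$ by McShane with the same Lipschitz constant.

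To estimate $\lip(h|_A)$, the only nontrivial pairs are $x_i$ ($i\le m$) and $z\in\{0,x_{m+1},\dots\}$. For these,
\[
|h(x_i)-h(z)|=|g(x_i)|\le d(x_i,0)\le K d(x_i,z)
\]
by (ii), since $z\ne x_i$. Pairs inside the first group are controlled by $\|g\|_{\lip}\le 1$, and pairs inside the second group give difference $0$. So $C=K$ works.

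The delicate point we expect is the possible repetition of points among the $x_n$. Since the deltas must be independent, repeated points should be excluded, so we assume the $x_n$ are distinct. The real obstacle lies in the (i)$\Rightarrow$(ii) direction, in handling a $y$ that lies outside the closure of $\{x_m : m>n\}$. Such a $y$ can only be one of the finitely many earlier $x_m$, which we handle as above.
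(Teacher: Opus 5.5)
Your proof is correct and is essentially the paper's argument. For (ii)$\Rightarrow$(i) the paper uses the same $K$-Lipschitz test function, built as $g\circ f_{M_m}$ with the retraction $f_{M_m}$ (identity on $\{x_1,\dots,x_m\}$, $0$ elsewhere) rather than by a McShane extension, and for (i)$\Rightarrow$(ii) it argues by contradiction with the same idea of projecting $\delta_{x_j}-\delta_{x_k}$ onto the smaller index, handling the index order via the triangle inequality instead of your $I-P_m$ trick (your distinctness assumption on the $x_n$ is implicit there too).
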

\begin{proof}
We first show that (ii) implies (i). 

Since $(x_n)_{n\geq 1}$ is a dense sequence in $M\backslash\{0\}$, we have that $\overline{\text{span}}\{\delta_{x_n}:n\ge 1\}=\mathcal{F}(M)$. It is thus sufficient to show that there exists $K\ge 1$ such that for all $m,p\ge 1$,  for any sequence $(\aa_n)_{n\ge 1}\subset \mathbb{R}$,
\[K \|\sum_{n=1}^{m+p}\aa_n \delta_{x_n}\|\ge \|\sum_{n=1}^{m}\aa_n \delta_{x_n}\|.\]

Let $M_m:=\{x_n\}_{n=1}^m$ and the map
\begin{equation}\label{eq:retrac}
    f_{M_m}(y):=\begin{cases}
      y &\text{ if }\ y\in M_m, \\
  0 &\text{ otherwise }.
  \end{cases}
\end{equation}
We get 
\begin{align*}
\lip(f_{M_m})&=\max_{x\in M_m}\sup_{y\ne x}\frac{d(f_{M_m}(x),f_{M_m}(y))}{d(x,y)}\\
&\le \max\left\{1,\max_{n\in\{1,...,m\}}\sup_{y\ne x_n}\frac{d(x_n,0)}{d(x_n,y)}\right\}\\
&\le K.
\end{align*}

Let $m,p\ge 1$ and $(\aa_n)\subset \mathbb{R}$.
Let $f\in \text{Lip}_0(M)$ with $\lip(f)=1$ and $ \tilde{f}=f\circ f_{M_m}$.
We then have $\lip(\tilde{f})\le K$ and
\[
K\left \|\sum_{n=1}^{m+p}\aa_n\delta_{x_n} \right \|\geq \left |\sum_{n=1}^{m+p}\aa_n  \delta_{x_n}(\tilde{f})\right |= \left|\sum_{n=1}^{m+p}\aa_n\tilde{f}(x_n)\right|=\left|\sum_{n=1}^{m}\aa_n f(x_n)\right|.
\]
We deduce that we have the desired inequality.\\

We now prove that (i) implies (ii). Indeed, if (ii) does not hold, then for every $n\geq 1$, there exists $j_n\ge 1$ such that $B(x_{j_n},\frac{d(x_{j_n},0)}{n})\backslash\{x_{j_n}\}$ is a non-empty open set. By the density of $\{x_i\}_{i=1}^\infty$, there exists $k_n\ne j_n$ such that
\[x_{k_n}\in B\left(x_{j_n},\frac{d(x_{j_n},0)}{n}\right),\]
deducing that $nd(x_{j_n},x_{k_n})<d(x_{j_n},0).$

Now, it follows that for every $n\geq 1$, 
\[\|\delta_{x_{j_n}}\|=d(x_{j_n},0)\ge n d(x_{j_n},x_{k_n}) = n \|\delta_{x_{j_n}}-\delta_{x_{k_n}}\|\]
and
\[\|\delta_{x_{k_n}}\|\ge \|\delta_{x_{j_n}}\|- \|\delta_{x_{j_n}}-\delta_{x_{k_n}}\|\ge (n-1)\|\delta_{x_{j_n}}-\delta_{x_{k_n}}\|.\]
The sequence $\{\delta_{x_n}\}_{n=1}^\infty$ is thus not a Schauder basis for $\mathcal{F}(M)$ since these inequalities would contradict the uniform boundedness of the canonical projections.
\end{proof}

Observe that condition (ii) implies that every point $x_n$ is isolated. This, plus the density of the sequence $(x_n)_{n\ge 1}$, forces the set $M$ to be countable itself and the dense sequence $(x_n)$ to be an enumeration of $M\backslash\{0\}$.

We can, therefore, state the following corollary with the complete characterization.

\begin{corollary}\label{cor:delta-basis}
Let $(M,d)$ be metric space with a distinguished point $0$. The space $\mathcal{F}(M)$ admits a Schauder basis given by deltas if and only if $M$ is at most countable and radially discrete with respect to $0$.
Moreover, such a basis is given by the deltas of any  enumeration of $M\backslash\{0\}$, and it is unconditional, since the norm on $\mathcal{F}(M)$ is equivalent to the norm 
\[\left\|\sum_{x\in M\backslash\{0\}}\aa_x \delta_{x}\right\|_1=\sum_{x\in M\backslash\{0\}}\aa_x d(x,0).\]
In particular, $\free{M}$ is isomorphic to $\ell^1(\NN)$.
\end{corollary}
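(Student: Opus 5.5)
We will derive Corollary~\ref{cor:delta-basis} from Proposition~\ref{prop:deltas-basis} together with the observation made right after it. The argument has three parts: necessity, sufficiency, and the ``moreover'' part.

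\emph{Necessity.} Assume $\mathcal{F}(M)$ has a Schauder basis of deltas $(\delta_{x_n})_{n\ge1}$, where we may assume $x_n\neq 0$ since $\delta_0=0$. We showed before Proposition~\ref{prop:deltas-basis} that $(x_n)$ must then be dense in $M\backslash\{0\}$. So Proposition~\ref{prop:deltas-basis} applies, and (i)$\Rightarrow$(ii) gives $K\ge1$ with $d(x_n,0)\le K d(x_n,y)$ for every $n$ and every $y\neq x_n$. Taking $y$ close to $x_n$ shows each $x_n$ is isolated. Now take any $a\in M\backslash\{0\}$. By density there are $x_{n_j}\to a$. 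If $a$ were not some $x_n$, then $d(x_{n_j},0)\le K d(x_{n_j},a)\to0$, so $a=0$, a contradiction. Hence $\{x_n\}=M\backslash\{0\}$, which shows $M$ is countable. The inequality in (ii) is then exactly radial discreteness for pairs $x\neq y$ with $x\neq0$; for $x=0$ it is trivial.

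\emph{Sufficiency.} Assume $M$ is countable and radially discrete, and enumerate $M\backslash\{0\}$ as $(x_n)$ in any order. This sequence is trivially dense in $M\backslash\{0\}$, and radial discreteness is condition (ii). Proposition~\ref{prop:deltas-basis} then gives that $(\delta_{x_n})$ is a Schauder basis. Since the enumeration was arbitrary, this also proves the first ``moreover'' claim. If $M\backslash\{0\}$ is finite, everything is finite-dimensional and trivial.

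\emph{The equivalence with $\|\cdot\|_1$, which is the main step.} Here the weighted $\ell^1$ norm must be read as $\sum |\alpha_x| d(x,0)$. The upper bound $\|\sum\alpha_x\delta_x\|\le\sum|\alpha_x|d(x,0)$ is the triangle inequality. For the lower bound, take a finitely supported $\mu=\sum\alpha_x\delta_x$ and define $g(x)=\operatorname{sign}(\alpha_x)\,d(x,0)$ on $M\backslash\{0\}$, with $g(0)=0$. We must bound $\lip(g)$. Take $x\neq y$ and suppose without loss of generality $d(x,0)\ge d(y,0)$. Then
\[|g(x)-g(y)|\le d(x,0)+d(y,0)\le 2d(x,0)\le 2K d(x,y).\]
So $\lip(g)\le 2K$, and therefore
\[\|\mu\|\ge \frac{\langle g,\mu\rangle}{2K}=\frac{1}{2K}\sum|\alpha_x|d(x,0).\]
This gives the equivalence on finitely supported elements, and it extends to all of $\mathcal{F}(M)$ by density.

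Unconditionality follows immediately, since the basis is equivalent to the unit vector basis of a weighted $\ell^1$. In particular $\mathcal{F}(M)\cong\ell^1(M\backslash\{0\},(d(x,0))_x)$, which is isometric to $\ell^1(\NN)$ via $\alpha_x\mapsto\alpha_x d(x,0)$ when $M$ is infinite.
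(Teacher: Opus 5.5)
Your proposal is correct and follows essentially the same route as the paper. The equivalence is read off from Proposition~\ref{prop:deltas-basis} together with the remark that (ii) forces each $x_n$ to be isolated and $(x_n)$ to enumerate $M\backslash\{0\}$; you justify that remark more explicitly than the paper does. The norm equivalence uses the same sign-weighted function $g(x)=\pm d(x,0)$ with $\lip(g)\le 2K$, and you rightly point out that $\|\cdot\|_1$ must be read with $|\alpha_x|$, which the statement omits.
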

\begin{proof}
The first part is clear, since it is a restatement of Proposition \ref{prop:deltas-basis} and the comment after it. 
Moreover, if $(\delta_{x_n})$ is a Schauder basis then the norm on $\mathcal{F}(M)$ is equivalent to the norm $\|\cdot\|_1$ defined by
\[\left\|\sum_{n=1}^{\infty}\aa_n \delta_{x_n}\right\|_1=\sum_{n=1}^{\infty}\aa_n d(x_n,0).\]
Indeed, given $\mu=\sum_{n=1}^{\infty}\aa_n \delta_{x_n}$, we have
\[\|\mu\|\le \sum_{n=1}^{\infty}|\aa_n| \|\delta_{x_n}\|=\|\mu\|_1.\]
On the other hand, if for $\mu=\sum_{n=1}^{\infty}\aa_n \delta_{x_n}$, we denote by $I=\{n: \aa_n\ge 0\}$ then by considering 
\[g(x_n):=\begin{cases}
   d(x_n,0) &\text{ if }\ n\in I; \\
   -d(x_n,0) &\text{ otherwise},
\end{cases}\]
we get \[\langle \mu, g\rangle= \sum_{n=1}^{\infty}\aa_n g(x_n)=\sum_{n=1}^{\infty}|\aa_n|d(x_n,0)=\|\mu\|_1.\]
Note that $g\in \text{Lip}_0(M)$ and that $\lip(g)\leq 2K$ since by assumption, for every $x\ne y$,
\[|g(x)-g(y)|\le d(x,0)+d(y,0)\leq  Kd(x,y)+Kd(x,y)=2Kd(x,y).\]
It follows that
\[\|\mu\|\geq \frac{\langle \mu, g\rangle}{\text{Lip}(g)}\geq \frac{1}{2K}\|\mu\|_1,\]
which finishes the proof.
\end{proof}

We observe that metric spaces $M_V$ associated with countable directed trees $V$ through the construction in Subsection \ref{subsec:shifts-M} (Proposition \ref{prop:backward-Tf}) produce such Lipschitz-free spaces since $M_V$ is indeed countable and radially discrete with respect to $0$, as for all $v\in V\backslash\{0\}$ and all $w\in V\backslash\{v\}$, we have
        \[d(v,w)=\mu_v+\mu_w\ge \mu_v= d(v,0).\]

\begin{remark}\label{rem:deltas-retract}
    A Schauder basis made by deltas is a particular case of a \textit{retractional Schauder basis}---see \cite{Nov20}---, that is, a Schauder basis on a Lipschitz-free space where the associated canonical projections to the first $n$ coordinates $(P_n)$ are all given by linearizations of Lipschitz retractions in the metric space. In this case, $P_n=T_{f_{M_n}}$, where $f_{M_n}$ is the Lipschitz retraction defined in Equation \eqref{eq:retrac}.
\end{remark}

\begin{example}\label{deltaexamp}
If $M$ is the metric space defined in \cite[Proposition 1.6]{ACP21} or if $M$ is given by $\{q^n:n\in \mathbb{Z}\}\cup \{0\}$ with $q\in \mathbb{R}\backslash\{0\}$ endowed with 
the absolute value then $\mathcal{F}(M)$ admits a Schauder basis given by 
deltas. Actually, if $M$ is a subset of $\mathbb{R}$ endowed with the absolute value then $\free{M}$ admits a 
Schauder basis given by deltas if and only if there exists $\varepsilon>0$ such that for all $x\in M\backslash\{0\}$, all $y\in M\backslash\{x\}$, we have
\[\frac{y}{x}\notin \left]1-\varepsilon,1+\varepsilon\right[.\]
 In particular, if $M=\{\frac{1}{n}\}_{n=1}^\infty \cup\{0\}$, then 
$\mathcal{F}(M)$ does not admit a Schauder basis given by deltas. However, it is well-known that $\free{M}$ is isometrically isomorphic to $\ell^1(\NN)$, through the identification $\Phi\colon \free{M}\to \ell^1(\NN)$ given by $\Phi(m_{\frac{1}{n+1},\frac{1}{n}})=e_n$, so the sequence $\{m_{\frac{1}{n+1},\frac{1}{n}}\}_{n=1}^\infty$ is a Schauder basis---which, moreover, is easily seen to be a retractional one.
\end{example}

We will finish this subsection proving that, when the Lipschitz-free space has a Schauder basis made by deltas, targeting property characterizes hypercyclicity (and weakly mixingness) of $T_f$ operators.

\begin{theorem}\label{hypdeltas}
Let $(M,d)$ be a metric space with a distinguished point $0$ and $f\in \text{Lip}_0(M,M)$. If $M$ is countable and radially discrete with respect to $0$, then the following assertions are equivalent:
\begin{enumerate}
\item $T_f$ is hypercyclic on $\free{M}$;
\item $T_f$ is weakly mixing on $\free{M}$;
\item $f$ has the \aim{} property around $0\in M$.
\end{enumerate}
\end{theorem}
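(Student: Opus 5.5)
The implications (3)$\Rightarrow$(2)$\Rightarrow$(1) are essentially already available. For (3)$\Rightarrow$(2) we apply Theorem~\ref{thm:aim-WHC}(1) to the linearization $(\free{M},T_f)$ of $(M,f)$ with $j=\delta$. Here $\delta$ is an isometry, hence uniformly continuous, and $\spa\,\delta(M)$ is dense. For (2)$\Rightarrow$(1), $M$ is countable, so $\free{M}$ is a separable Banach space, and a weakly mixing operator on it is transitive, hence hypercyclic by Birkhoff's transitivity theorem. The real work is (1)$\Rightarrow$(3).

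For (1)$\Rightarrow$(3), we use Corollary~\ref{cor:delta-basis}. Every $\mu\in\free{M}$ is $\mu=\sum_{x\neq 0}\alpha_x\delta_x$, and $\|\mu\|$ is equivalent to $\sum|\alpha_x|d(x,0)$ with constants $1$ and $2K$. Since $T_f\delta_x=\delta_{f(x)}$ and $\delta_0=0$, we get
\[T_f^n\mu=\sum_{z\neq 0}\Big(\sum_{x\in f^{-n}(z)}\alpha_x\Big)\delta_z .\]
Radial discreteness also gives $d(u,w)\geq d(u,0)/K$ for $u\neq w$, so two distinct points are never closer than a multiple of their distance to $0$. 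Fix $\varepsilon>0$ and a finite $F\subset M\setminus\{0\}$; points of $F$ equal to $0$ are trivial, taking $u=u_0=v=v_0=0$ or similar. Let $h$ be a hypercyclic vector. Since the set of hypercyclic vectors is dense (an orbit of a hypercyclic vector is dense and consists of hypercyclic vectors), we pick $\mu=T_f^m h$ with $\|\mu-\sum_{x\in F}\delta_x\|<\eta$, and then $n>m$ large with $\|T_f^{n}\mu-\sum_{y\in F}\delta_y\|<\eta$, where $\eta$ is small compared with $\varepsilon$ and $\min_{x\in F}d(x,0)$. Equivalently, we use $T_f^n\mu$ close to the target and $\mu$ close to the source. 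Rather than a single vector doing both jobs, we split the argument into one vector for (i) and one for (ii): take $\mu$ near $\sum_{x\in F}\delta_x$ with $\|T_f^n\mu\|<\eta$, and $\nu$ with $\|\nu\|<\eta$ and $T_f^n\nu$ near $\sum_{y\in F}\delta_y$. Both exist with a common $n$ by hypercyclicity, using the standard fact that $N(U,W)\cap N(W,V)\neq\emptyset$ for hypercyclic operators; this is exactly the Herrero/Bès–Peris argument used to show hypercyclic operators satisfy $N(U,W)\cap N(W,V)\neq\emptyset$, which we invoke.

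The main obstacle is extracting points from these coefficient estimates. For (ii), fix $y\in F$. The coefficient of $\delta_y$ in $T_f^n\nu$ is $\sum_{x\in f^{-n}(y)}\beta_x$, which is close to $1$ (up to $2K\eta/d(y,0)$). Since $\sum|\beta_x|d(x,0)\leq 2K\eta$, there is $v\in f^{-n}(y)$ with $\beta_v\neq0$, and we would like $d(v,0)$ small. If all preimages are far from $0$, the coefficient sum is forced to be small, which is a contradiction. More precisely, $\sum_{x\in f^{-n}(y)}|\beta_x|\geq 1/2$ together with $\sum|\beta_x|d(x,0)\leq 2K\eta$ yields $v\in f^{-n}(y)$ with $d(v,0)\leq 4K\eta$. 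Then $v_0=0$ works, since $d(v,v_0)$ is small, $f^n(v)=y$ and $f^n(v_0)=0$. For (i), fix $x\in F$. The coefficient $\alpha_x$ is near $1$, and $z=f^n(x)$. If $z=0$ we take $u=x$, $u_0=0$. Otherwise the coefficient of $\delta_z$ in $T_f^n\mu$ has size at most $2K\eta/d(z,0)$, while $\alpha_x\approx1$ contributes to it. If $d(z,0)$ is small, take $u_0=0$. If not, the other preimages of $z$ must carry total coefficient approximately $-1$, and since $\sum|\alpha_w|d(w,0)$ restricted away from $F$ is at most $2K\eta$, some preimage $u_0\in f^{-n}(z)\setminus F$ has $d(u_0,0)\lesssim K\eta$. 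Then $u=x$ and $f^n(u)=f^n(u_0)$. The bookkeeping needed to separate the mass near $F$ from the rest, using the separation $d(u,w)\geq d(u,0)/K$ to ensure that elements of $F$ are not confused with small points, is where care is required. Choosing $\eta\ll\min_F d(\cdot,0)/K^2$ should handle it.
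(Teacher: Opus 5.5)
Your implications (3)$\Rightarrow$(2)$\Rightarrow$(1) are fine. Your coefficient extraction would also work \emph{if} you had the two vectors $\mu,\nu$ with a common $n$. The gap is the step that produces them. The condition ``$N(U,W)\cap N(W,V)\neq\emptyset$ for all non-empty open $U,V$ and all $0$-neighbourhoods $W$'' is not a consequence of hypercyclicity. Since $N(U,W)\cap N(W,V)$ is the return set of $T\oplus T$ from $U\times W$ to $W\times V$, weak mixing trivially implies it. Conversely, it is exactly the sufficient condition for weak mixing that the paper verifies in the proof of Theorem~\ref{thm:aim-WHC} (B\`es--Peris). So it is \emph{equivalent} to weak mixing. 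Herrero's question was precisely whether every hypercyclic operator has this property, and the answer is negative (De la Rosa--Read; Bayart--Matheron on classical spaces such as $\ell^p$ and $c_0$). Invoking it therefore assumes (1)$\Rightarrow$(2), which is part of what has to be proved. Since $\free{M}$ is isomorphic to $\ell^1$ here, no abstract argument can supply it; the proof must use the structure of $T_f$.

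What hypercyclicity does give you is transitivity: one vector $\rho$ and one $m$ with $\rho$ near a source and $T_f^m\rho$ near a target. Your first attempt, with source and target both equal to $\sum_{x\in F}\delta_x$, cannot force anything. If $f^m$ permutes $F$ (e.g.\ $F$ made of periodic points), then $\rho=\sum_{x\in F}\delta_x$ already works and no point near $0$ need appear. The missing idea, which the paper uses, is to choose non-resonant coefficients. For $F=\{x_1,\dots,x_k\}$, take $\rho\approx\sum_j 3^j\delta_{x_j}$ and $T_f^m\rho\approx\sum_j 3^{k+j}\delta_{x_j}$. Then the coefficients $a_i$, $i\le k$, of $\rho$ are positive and close to $3^i$, so every non-empty subset sum of them exceeds $2$. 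Moreover, no such subset sum is within $2$ of any $3^{k+j}$, because $\sum_{i\le k}3^i<3^{k+1}/2$. Consequently the fibre $f^{-m}(x_j)$ must contain points outside $F$ whose coefficients sum to modulus at least $1$. The same holds for the fibre of $f^m(x_j)$ whenever $d(f^m(x_j),0)\ge\varepsilon$, whether or not $f^m(x_j)\in F$. Since the part of $\rho$ off $F$ satisfies $\sum|a_i|\,d(x_i,0)<\varepsilon$, each such fibre contains some $x_i$ with $d(x_i,0)<\varepsilon$. This gives (ii) with $v_0=0$ and (i) with $u=x_j$, $u_0=x_i$, simultaneously for the same $m$. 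Your bookkeeping then goes through with this single vector in place of $\mu$ and $\nu$.
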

\begin{proof}
Let $(x_n)_{n\geq1}$ be an enumeration of $M\backslash\{0\}$. We know by Corollary \ref{cor:delta-basis} that $(\delta_{x_n})_{n\geq1}$ is a Schauder basis in $\free{M}$.
Moreover, we have (3) $\Rightarrow$ (2) $\Rightarrow$ (1) by Theorem~\ref{thm:aim-WHC}.\\

(1) $\Rightarrow$ (3). Assume that $T_f$ is hypercyclic and thus topologically transitive.\\

Let $\varepsilon>0$, $k\ge 1$ and $m_0\ge 1$. We want to show that there exists $m\ge m_0$ such that for every $1\le j\le k$, 
\begin{itemize}
\item there exists $x\in M$ such that $d(x,0)<\varepsilon$  and $d(f^m(x),f^m(x_j))<\varepsilon$,
\item there exists $x'\in M$ such that $d(x',0)<\varepsilon$ and $f^m(x')=x_j$.
\end{itemize}
This will imply that $f$ has the targeting property around $0$.\\

To this end, we let $M_k=\{x_1,\dots,x_k\}$ and
\[\mu_k=\sum_{j=1}^k 3^j\delta_{x_j} \quad \text{and}\quad \nu_k=\sum_{j=1}^k 3^{k+j}\delta_{x_j}.\]
Therefore, by topological transitivity of $T_f$, there exist an integer $m\ge m_0$ and $\rho\in \mathcal{F}(M)$ such that 
\[\|\rho-\mu_k\|_1<\gamma \quad \text{and}\quad \|T_f^m\rho-\nu_k\|_1<\gamma\]
where $\gamma$ is given by
\[\gamma=\min\left\{\frac{\min\{d(x_j,0):1\le j\le k\}}{k},\varepsilon\right\}.\]
By density of $\text{vect}\{\delta_{x_n}: n\ge 1\}$, we can assume that $\rho=\sum_{j=1}^N a_j \delta_{x_j}$ with $N\ge k$. We then have
\[\left\|\sum_{j=1}^N a_j \delta_{x_j}-\sum_{j=1}^k 3^j\delta_{x_j}\right\|_1<\gamma \quad \text{and}\quad  \left\|\sum_{j=1}^N a_j \delta_{f^m(x_j)}-\sum_{j=1}^k 3^{k+j}\delta_{x_j}\right\|_1<\gamma.\]

In particular, we deduce by using the value of $\gamma$ that
\begin{enumerate}
\item for every $1\le j\le k$, $\displaystyle |a_j-3^j|<\frac{1}{k}$,
\item  $\displaystyle \sum_{k< j\le N}|a_j|d(x_j,0)<\varepsilon$,
\item for every $1\le j\le k$, $\displaystyle\left|\sum_{1\le i\le N:f^m(x_i)=x_j}a_i-3^{k+j}\right|<1$,
\item for every $x\in M\backslash M_k$,
\[\displaystyle\left|\sum_{1\le i\le N:f^m(x_i)=x}a_i\right| d(x,0)<\varepsilon.\]
\end{enumerate}
It follows from $(1)$ that for every set $F\subset \{1,\dots,k\}$ and every $1\le j\le k$, we have
\[\left|\sum_{i\in F} a_i-3^{k+j}\right|> 2.\]

Indeed, by (1), we have
\begin{align*}
     \left|\sum_{i\in F}a_i\right| < \sum_{i=1}^k\left(3^i+\frac{1}{k}\right)= \dfrac{3^{k+1}-1}{2},
\end{align*}
and from this follows
\begin{align*}
    \left|\sum_{i\in F} a_i-3^{k+j}\right | \geq 3^{k+j}-\dfrac{3^{k+1}-1}{2} >\frac{3^{k+1}}{2}> 2.
\end{align*}
We then deduce from $(1)$ and $(3)$ that for every $1\le j\le k$, 
\[\sum_{i\in ]k,N]:f^m(x_i)=x_j}|a_i|\ge \left|\sum_{i\in ]k,N]:f^m(x_i)=x_j}a_i\right|>1\]

since
\begin{align*}
\left|\sum_{i\in ]k,N]:f^m(x_i)=x_j}a_i\right|&=\left|\left(\sum_{1\le i\le N:f^m(x_i)=x_j}a_i-3^{k+j}\right)-\left(\sum_{1\le i\le k:f^m(x_i)=x_j}a_i-3^{k+j}\right) \right|\\
&\ge \left|\left(\sum_{1\le i\le k:f^m(x_i)=x_j}a_i-3^{k+j}\right) \right|-\left|\left(\sum_{1\le i\le N:f^m(x_i)=x_j}a_i-3^{k+j}\right)\right|\\
&>2-1=1.
\end{align*}

We can now conclude from $(2)$ that for every $1\le j\le k$, there exists $i\in ]k,N]$ such that $f^m(x_i)=x_j$ and $d(x_i,0)<\varepsilon$.\\

It remains to show that for every $1\le j\le k$, there exists $x\in M$ such that $d(x,0)<\varepsilon$  and $d(f^m(x),f^m(x_j))<\varepsilon$.  
Let $1\le j\le k$. 
If $d(f^m(x_j),0)<\varepsilon$ then it suffices to consider $x=0$. From now, we will thus assume that $d(f^m(x_j),0)\ge \varepsilon$ and we remark that it follows from $(1)$ that for every non-empty set $F\subset \{1,\dots,k\}$, we have
\begin{equation}\label{eq_coeff}
    \left|\sum_{i\in F}a_i\right|> 2.
\end{equation}

There are now two possibilities:
\begin{itemize}
\item If $f^m(x_j)\notin M_k$, then we deduce from $(4)$ that 
\[\left|\sum_{1\le i\le N:f^m(x_i)=f^m(x_j)}a_i\right| d(f^m(x_j),0)<\varepsilon\]
and since we have assumed that $d(f^m(x_j),0)\ge \varepsilon$. We deduce that 
\[\left|\sum_{1\le i\le N:f^m(x_i)=f^m(x_j)}a_i\right|<1.\]
Combined with \eqref{eq_coeff} for $F=\{1\le i\le k:f^m(x_i)=f^m(x_j)  \}\ni j$, it follows that  
\[\left|\sum_{i\in ]k,N]:f^m(x_i)=f^m(x_j)}a_i\right|\ge 1.\]
Therefore, by $(2)$, there exists $i\in ]k,N]$ such that $f^m(x_i)=f^m(x_j)$ and $d(x_i,0)<\varepsilon$. Thus we can consider $x=x_i$ in this case.

\item If $f^m(x_j)\in M_k$, then $f^m(x_j)=x_J$ for some $1\le J\le k$ and we use $(3)$ to get that 
\[\left|\sum_{1\leq i\leq N:f^m(x_i)=f^m(x_j)}a_i-3^{k+J}\right|<1.\]
We can then deduce from $(1)$ that
\[\left|\sum_{i\in ]k,N]:f^m(x_i)=f^m(x_j)}a_i\right|\ge 1\]
thus by $(2)$, there exists $i\in ]k,N]$ such that $f^m(x_i)=f^m(x_j)$ and $d(x_i,0)<\varepsilon$. Then, we can consider $x=x_i$ in this case.
\end{itemize}

\end{proof}

\begin{remark}
    An easy modification of the statement above would characterize the mixing property of $T_f$ if and only if $f$ has the strict targeting property.
\end{remark}

\begin{remark} As mentioned in Section~\ref{subsec:shifts-M} in the case of $M_V$, any operator $T_f$ satisfying the conditions of Theorem \ref{hypdeltas} can be understood as a slight generalization of a backward shift on trees. Indeed, the space $M$ on the statement will satisfy that $\free{M}$ is isomorphic to $\ell^1(M\backslash\{0\},\mu)$ for $\mu_x=d(x,0)$. This is done thanks to the isomorphism generated by $\Phi(e_n):= \delta_{x_n}$. In this case, 
    \[T_f(\delta_{x_n})= \delta_{f(x_n)}=\Phi(e_{f(x_n)}).\]
    Through this identification, $T_f$ would be conjugated to the operator $B_{f}$ defined on $\ell^1(M\backslash\{0\},\mu)$ by 
    \[B_{f}(e_{x_n}):= e_{f(x_n)}.\]
    
    Notice that such an operator can be hypercyclic and might present some differences with the formal definition of backward shifts on trees, since the map $f$ is allowed to have \textit{unconnected components} (two different components $A,B\subset M$ such that no orbit of an element of $A$ has intersection with an orbit of an element of $B$), or to have \textit{cycles} (some orbits can be eventually periodic).
\end{remark}

As mentioned in Example~\ref{deltaexamp}, if we consider the metric space $M=\{\frac{1}{n}\}_{n=1}^\infty \cup\{0\}$, then 
$\mathcal{F}(M)$ does not admit a Schauder basis given by deltas. One can wonder if we can therefore find a map $f\in \lip_0(M,M)$ such that $T_f$ is hypercyclic and $f$ has not the targeting property around $0$. However, even if $\mathcal{F}(M)$ does not admit a Schauder basis of deltas, we are going to show that for this space, we keep the equivalence between $f$ has the targeting property around $0$ and $T_f$ is hypercyclic.

\subsection{When points far from $0$ are well separated}

In this subsection, we will extend the previous technique by considering a more general family of metric spaces.

\begin{definition}
    Let $(M,d)$ be a metric space, $x_0\in M$. We say that $M$ is \textbf{asymptotically uniformly discrete} (with respect to $x_0$) if for every $\varepsilon>0$, there exists $r_{\varepsilon}>0$ such that for all $x\ne y$,
\[d(x,y)<r_{\varepsilon}\quad \implies x,y\in B(x_0,\varepsilon).\]
\end{definition}

In particular, the metric spaces considered in Theorem~\ref{hypdeltas} satisfy this condition but also the metric space $M=\{1/n:n\ge 1\}\cup\{0\}$ endowed with the absolute value. This condition on the separation of points far from $0$ will allow us to deduce convenient inequalities on the coefficients $(a_i)_{1\le i\le k}$ and $(b_j)_{1\le j\le N}$ from inequalities
\[\left\|\sum_{i=1}^k a_i \delta_{x_i}-\sum_{j=1}^N b_j \delta_{y_j}\right\|<\gamma\]
when $\gamma$ is sufficiently small. These inequalities are the key point in the proof and thus in the generalization of Theorem~\ref{hypdeltas} to this bigger family of metric spaces.

\begin{proposition}\label{gamma}
Let $M$ be a metric space with a distinguished point $0\in M$. If $M$ is asymptotically uniformly discrete, then for every sum $\sum_{i=1}^k a_i \delta_{x_i}$ with $a_i\ne 0$ and $x_i\ne 0$, for every $\varepsilon>0$, there exists $\gamma>0$ such that if 
\[\left\|\sum_{i=1}^k a_i \delta_{x_i}-\sum_{j=1}^N b_j \delta_{y_j}\right\|<\gamma\]
with $(y_j)_{1\le j\le N}$ pairwise disjoint, then there exists an injective map $\varphi\colon [1,k]\to [1,N]$ such that
\begin{enumerate}
\item for every $1\le i\le k$, 
\[x_i=y_{\varphi(i)} \quad \text{and}\quad |a_i-b_{\varphi(i)}|<\varepsilon\]
\item we have 
\[\sum_{j\in [1,N]\backslash \varphi([1,k]):y_j\notin B(0,\varepsilon)}|b_j|<1.\]
\end{enumerate}
\end{proposition}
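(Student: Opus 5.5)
The plan is to test the difference $\mu-\nu$, where $\mu=\sum_{i=1}^k a_i\delta_{x_i}$ and $\nu=\sum_{j=1}^N b_j\delta_{y_j}$, against suitable Lipschitz functions $g\in\lip_0(M)$ that vanish on $B(0,\eta)$ for a small radius $\eta$. Since $|\langle g,\mu-\nu\rangle|\le \lip(g)\,\|\mu-\nu\|<\lip(g)\gamma$, it suffices to build test functions whose Lipschitz constants are bounded independently of $N$ and of the $y_j$. The key input is asymptotic uniform discreteness: for $\eta>0$ there is $r=r_\eta$ such that any two distinct points outside $B(0,\eta)$ are at distance at least $r$. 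Set $\eta_0=\min\{\varepsilon,\min_i d(x_i,0)\}/2$ and $r=r_{\eta_0}$, so that the $x_i$ lie outside $B(0,2\eta_0)$ and are $r$-separated from every other point outside $B(0,\eta_0)$.

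First we locate each $x_i$ among the $y_j$. For each $i$, consider the bump function $g_i(z)=\max\{0,\,1-d(z,x_i)/\rho\}$ with $\rho=\min\{r,\eta_0\}$. Its Lipschitz constant is $1/\rho$, it vanishes at $0$, and its support within $B(x_i,\rho)$ contains no other point of $M$ besides $x_i$ except points of $B(0,\eta_0)$. But $d(x_i,0)\ge 2\eta_0$, so $B(x_i,\rho)\cap B(0,\eta_0)=\emptyset$. Hence $B(x_i,\rho)=\{x_i\}$, and therefore $\langle g_i,\mu-\nu\rangle=a_i-b_{j}$ if $x_i=y_j$ for some $j$, and $a_i$ otherwise. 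Choosing $\gamma<\rho\min\{\varepsilon,\min_i|a_i|\}$ rules out the second case, since $a_i\neq 0$. This produces $\varphi(i)$, which is unique and injective because the $y_j$ are pairwise distinct and the $x_i$ are distinct (we may assume the sum is reduced). It also gives $|a_i-b_{\varphi(i)}|<\varepsilon$.

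For (2), let $J=\{j\notin\varphi([1,k]):\ y_j\notin B(0,\varepsilon)\}$ and define $g(y_j)=\operatorname{sign}(b_j)$ for $j\in J$, $g=0$ at $0$ and at all $x_i$, and extend by McShane. The point to check is that this partial function is Lipschitz with a uniform constant. Every point where $g$ is nonzero lies outside $B(0,\varepsilon)$, so it is at distance at least $\varepsilon$ from $0$ and at distance at least $r_\varepsilon$ from any other point outside $B(0,\varepsilon)$. The nodes $x_i$ are also outside $B(0,\eta_0)$, hence at distance at least $r_{\eta_0}$ from the $y_j$ with $j\in J$. So the Lipschitz constant is at most $L:=2/\min\{\varepsilon,r_\varepsilon,r_{\eta_0}\}$. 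Since $g$ vanishes at the points $x_i=y_{\varphi(i)}$, we get $\langle g,\mu-\nu\rangle=-\sum_{j\in J}|b_j|+\sum_{j\notin J\cup\varphi([1,k])}b_j g(y_j)$.

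The main obstacle is that last residual sum over $y_j$ inside $B(0,\varepsilon)$, where $g$ is not controlled. The fix I would try first is to modify $g$ to vanish on $B(0,\varepsilon)$: define it only on the nodes listed above plus all $y_j\in B(0,\varepsilon)$, with value $0$ at the latter. The needed separation still holds, because the nonzero nodes are outside $B(0,\varepsilon)$ and any point within distance $r_\varepsilon$ of them would also have to lie outside $B(0,\varepsilon)$. Thus a point of $B(0,\varepsilon)$ is at distance at least $r_\varepsilon$ from the support nodes, and the constant $L$ is unchanged. Then $\sum_{j\in J}|b_j|\le L\gamma<1$ once $\gamma<1/L$. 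So the final choice is $\gamma<\min\{\rho\varepsilon,\ \rho\min_i|a_i|,\ 1/L\}$.
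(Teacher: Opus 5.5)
Your proof is correct and follows essentially the same route as the paper: your bump functions $g_i$ are exactly the indicator functions $f_{\{x_i\}}$ used there (since $B(x_i,\rho)=\{x_i\}$), and your corrected test function for (2) is the paper's function. That function is $\pm1$ on the $y_j$ with $j\in J$ and $0$ at every other node, and its Lipschitz bound comes from the same separation property given by $r_\varepsilon$. The McShane extension is not needed, since setting the function equal to $0$ on all of $M$ outside $\{y_j : j\in J\}$ already gives Lipschitz constant at most $2/r_\varepsilon$.
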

\begin{proof}
Let $\nu=\sum_{i=1}^k a_i \delta_{x_i}$ with $a_i\ne 0$ and $x_i\ne 0$. Let $\varepsilon>0$. Since each nonzero point in $M$ is isolated, if for each finite subset $F$ of $M\backslash\{0\}$ we let 
\[f_{F}(x):=\begin{cases}
      1 &\text{ if}\ x\in F \\
  0 &\text{ otherwise },
\end{cases}\]
then $f_F\in \text{Lip}_0(M)$ because 
\[\lip(f_{F})\le \max_{x\in F} \frac{1}{\inf_{y\ne x} d(x,y)}=\max_{x\in F}\lip(f_{\{x\}})<\infty.\]

Then, we let
\[L=\max_{x\in \{x_1,\dots,x_k\}}\lip(f_{\{x\}})\quad  \text{and} \quad  \gamma=\frac{1}{2}\min\left\{\frac{\min\{|a_i|:1\le i\le k\}}{L},\frac{\varepsilon}{L},r_{\varepsilon} \right\}.\] 
Assume that
\[\left\|\sum_{i=1}^k a_i \delta_{x_i}-\sum_{j=1}^N b_j \delta_{y_j}\right\|<\gamma\]
with $(y_j)_{1\le j\le N}$ pairwise disjoint.\\
We first deduce that for every finite subset $F$ of $\{x_1,\dots,x_k\}$, we have
\[
\gamma>\left\|\sum_{i=1}^k a_i \delta_{x_i}-\sum_{j=1}^N b_j \delta_{y_j}\right\|\ge \frac{1}{L} \left\vert\sum_{i=1}^k a_i \delta_{x_i}(f_F)-\sum_{j=1}^N b_j \delta_{y_j}(f_F)\right\vert= \frac{1}{L} \left\vert\sum_{i=1}^k a_i f_F(x_i)-\sum_{j=1}^N b_j f_F(y_j)\right\vert.
\]
In particular, for $F=\{x_i\}$, it means that that there exists $1\le j\le N$ such that $y_j=x_i$ because if it was not the case we would have \[\gamma>\frac{|a_i|}{L},\]
contradicting the definition of $\gamma$. Moreover, $(y_j)_{1\le j\le N}$ being pairwise disjoint, for every $1\le i\le k$, there exists a unique $1\le j\le N$, that we will denote by $\varphi(i)$, such that $y_{\varphi(i)}=x_i$ and in addition,  
\[\frac{|a_i-b_{\varphi(i)}|}{L}<\gamma\]
and thus $|a_i-b_{\varphi(i)}|<\varepsilon$. Finally, if we let $J=\{j\in [1,N]\backslash \varphi([1,k]); y_j\notin  B(0,\varepsilon)\}$, and
\[f(x)=\begin{cases}
      1 &\text{ if}\ x=y_j \ \text{with $j\in J$ and $b_j\ge 0$} \\
      -1 &\text{ if}\ x=y_j \ \text{with $j\in J$ and $b_j< 0$} \\      
  0 &\text{ otherwise }
\end{cases},\]
then $f\in \text{Lip}_0(M)$ because by assumption,
\begin{align*}
\lip(f)=\sup_{x\ne y}\frac{|f(x)-f(y)|}{d(x,y)}
&\le \max_{j\in J}\sup_{x\ne y_j}\frac{|f(x)-f(y_j)|}{d(x,y_j)}\\
&\le 2 \max_{j\in J}\sup_{x\ne y_j}\frac{1}{d(x,y_j)}\\
&\le 2 \frac{1}{r_{\varepsilon}}\le \frac{1}{\gamma}.
\end{align*}

Therefore we can conclude that
\begin{align*}
\sum_{j\in J}|b_j|
=\left|-\sum_{j\in J}b_jf(y_j)\right|
&=\left|\sum_{i=1}^k a_if(x_i)-\sum_{j=1}^{N}b_jf(y_j)\right|\\
&=\left|\sum_{i=1}^k a_i\delta_{x_i}(f)-\sum_{j=1}^{N}b_j\delta_{y_j}(f)\right|\\
&\le \frac{1}{\gamma}\left\|\sum_{i=1}^k a_i\delta_{x_i}-\sum_{j=1}^{N}b_j \delta_{y_j}\right\|< 1.
\end{align*}
\end{proof}

\begin{theorem}\label{hypdeltas2}
Let $(M,d)$ be a metric space with a distinguished point $0$ and $f\in \text{Lip}_0(M,M)$. If $M$ is countable and asymptotically uniformly discrete, then the following assertions are equivalent:
\begin{enumerate}
\item $T_f$ is hypercyclic on $\free{M}$;
\item $T_f$ is weakly mixing on $\free{M}$;
\item $f$ has the \aim{} property around $0\in M$.
\end{enumerate}
\end{theorem}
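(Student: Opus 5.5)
The implications (3) $\Rightarrow$ (2) $\Rightarrow$ (1) follow from Theorem~\ref{thm:aim-WHC}. The embedding $\delta\colon M\to\free{M}$ is an isometry, hence uniformly continuous, so the targeting property around $0$ gives weak mixing of $T_f$. Since $M$ is countable, $\free{M}$ is a separable Banach space, and weak mixing then implies hypercyclicity by Birkhoff's transitivity theorem. The real work is (1) $\Rightarrow$ (3). There I would mimic the proof of Theorem~\ref{hypdeltas}, replacing the $\ell^1$ coefficient estimates (which relied on the delta basis) by Proposition~\ref{gamma}.

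Fix an enumeration $(x_n)$ of $M\setminus\{0\}$; its span of deltas is dense. Asymptotic uniform discreteness makes every nonzero point isolated, so the sets $M_k=\{x_1,\dots,x_k\}$ are finite sets of isolated points. By Remark (b) it suffices, given $\varepsilon>0$, $k$ and $m_0$, to find $m\ge m_0$ such that for each $j\le k$:
\begin{itemize}
\item[(a)] some $x$ with $d(x,0)<\varepsilon$ satisfies $d(f^m(x),f^m(x_j))<\varepsilon$;
\item[(b)] some $x'$ with $d(x',0)<\varepsilon$ satisfies $f^m(x')=x_j$.
\end{itemize}
I may shrink $\varepsilon$ so that $\varepsilon<\min_j d(x_j,0)$, so that no $x_j$ lies in $B(0,\varepsilon)$. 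Then I take $\mu_k=\sum_{j\le k}3^j\delta_{x_j}$ and $\nu_k=\sum_{j\le k}3^{k+j}\delta_{x_j}$. By transitivity I get $m\ge m_0$ and $\rho=\sum_{i\le N}a_i\delta_{x_i}$, $N\ge k$, with $\|\rho-\mu_k\|<\gamma$ and $\|T_f^m\rho-\nu_k\|<\gamma$. Here $\gamma$ is the minimum of the constants produced by Proposition~\ref{gamma} for $\mu_k$ and for $\nu_k$, with tolerance $\varepsilon'=\min\{\varepsilon,1/k\}$.

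From the first inequality, Proposition~\ref{gamma} gives $|a_i-3^i|<1/k$ for $i\le k$ and $\sum_{i>k,\,x_i\notin B(0,\varepsilon')}|a_i|<1$. Every $x_i$ with $i>k$ whose coefficient carries mass at least $1$ therefore lies near $0$. This is the replacement for condition (2) of the previous proof. The price is that we only control a total mass below $1$ rather than the weighted sum. For the second inequality I write $T_f^m\rho=\sum_{z}c_z\delta_z$, grouping over distinct image points $z\neq 0$, where $c_z=\sum_{f^m(x_i)=z}a_i$. Applying Proposition~\ref{gamma} to $\nu_k$ gives $|c_{x_j}-3^{k+j}|<1$ for $j\le k$ and $\sum_{z\notin M_k\cup B(0,\varepsilon')}|c_z|<1$. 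These are exactly the analogues of (3) and (4).

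With these in hand, the combinatorics of the earlier proof go through verbatim. For (b), the indices $i\le k$ contribute at most about $3^{k+1}/2$ to $c_{x_j}$, so the indices $i>k$ mapping to $x_j$ carry mass greater than $1$; hence one of them has $x_i\in B(0,\varepsilon)$. For (a), if $f^m(x_j)\in B(0,\varepsilon)$ I take $x=0$. Otherwise $z=f^m(x_j)$ either lies outside $M_k$, in which case $|c_z|<1$, or equals some $x_J$, in which case $|c_z-3^{k+J}|<1$. In both cases the partial sum over $i\le k$ with $f^m(x_i)=z$ is bounded away from $c_z$ by at least $1$, using the triadic gap estimate \eqref{eq_coeff}. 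So some $i>k$ with $f^m(x_i)=z$ has nonnegligible coefficient, and hence $x_i\in B(0,\varepsilon)$. Here $z$ is exactly $f^m(x_j)$, which is even better than required.

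The main obstacle I anticipate is the careful bookkeeping in applying Proposition~\ref{gamma} to $T_f^m\rho$. I must merge repeated image points so that the $y_j$ are pairwise distinct, and discard the point $0$. I also have to handle coefficients that land inside $B(0,\varepsilon)$, which Proposition~\ref{gamma} does not control; only the mass outside $B(0,\varepsilon)$ is bounded. I will also need to make sure that the threshold ``mass $\ge 1$'' used in (a) and (b) is compatible with the bound $<1$ from item (2) of Proposition~\ref{gamma}, possibly by scaling $\mu_k,\nu_k$ by a large constant to gain slack.
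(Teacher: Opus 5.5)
Your proposal is correct and follows the paper's proof essentially step for step. It uses the same test vectors $\mu_k,\nu_k$ with triadic coefficients, and applies Proposition~\ref{gamma} to both $\rho$ and the merged expansion of $T_f^m\rho$ to recover the analogues of conditions (1)--(4), followed by the same case analysis for the two halves of the targeting property. The extra slack you worry about at the end is not needed: the masses you obtain are strictly greater than $1$, while Proposition~\ref{gamma} bounds the mass outside $B(0,\varepsilon')$ strictly below $1$.
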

\begin{proof}
We know that  (3) $\Rightarrow$ (2) $\Rightarrow$ (1). It remains to show that (1) $\Rightarrow$ (3) by adapting the proof of Theorem~\ref{hypdeltas}.\\

Assume that $T_f$ is hypercyclic and thus topologically transitive. Let $(x_n)_{n\ge 1}$ be an enumeration of $M\backslash\{0\}$.
Let $k\ge 1$, $0<\varepsilon<1/k$ and $m_0\ge 1$. We want to show that there exists $m\ge m_0$ such that for every $1\le j\le k$, 
\begin{itemize}
\item there exists $x\in M$ such that $d(x,0)<\varepsilon$  and $d(f^m(x),f^m(x_j))<\varepsilon$,
\item there exists $x'\in M$ such that $d(x',0)<\varepsilon$ and $f^m(x')=x_j$.
\end{itemize}
This will imply that $f$ has the targeting property around $0$.\\

To this end, let us define $M_k=\{x_1,\dots,x_k\}$ and
\[\mu_k=\sum_{j=1}^k 3^j\delta_{x_j} \quad \text{and}\quad \nu_k=\sum_{j=1}^k 3^{k+j}\delta_{x_j}.\]
Therefore, by topological transitivity of $T_f$, there exist $m\ge m_0$ and $\rho\in \mathcal{F}(M)$ such that 
\[\|\rho-\mu_k\|<\gamma \quad \text{and}\quad \|T_f^m\rho-\nu_k\|<\gamma\]
where $\gamma$ is a positive real number satisfying
\[\gamma\le\min\left\{\frac{\min\{d(x_j,0):1\le j\le k\}}{k},\varepsilon\right\}\]
and sufficiently small to get the conclusion of Proposition~\ref{gamma} for $\mu_k$ and $\varepsilon$ and also for $\nu_k$ and $\varepsilon$. 
By density of $\sp\{\delta_{x_n}: n\ge 1\}$, we can assume that $\rho=\sum_{j=1}^N a_j  \delta_{x_j}$ with $N\ge k$. We then have
\[\left\|\sum_{j=1}^N a_j \delta_{x_j}-\mu_k\right\|<\gamma \quad \text{and}\quad \left\|\sum_{j=1}^N a_j \delta_{f^m(x_j)}-\nu_k\right\|<\gamma.\]

In particular, we deduce by using the value of $\gamma$ and   Proposition~\ref{gamma} that
\begin{enumerate}
\item for every $1\le j\le k$, $\displaystyle |a_j-3^j|<\varepsilon<1/k$, \label{cond:I}
\item  $\displaystyle \sum_{k< i\le N: x_i\notin B(0,\varepsilon)}|a_i|<1$,\label{cond:II}
\item for every $1\le j\le k$, $\displaystyle\left|\sum_{1\le i\le N:f^m(x_i)=x_j}a_i-3^{k+j}\right|<\varepsilon<1$,\label{cond:III}
\item for every $x\in M\backslash M_k$ with $x\notin B(0,\varepsilon)$
\[\displaystyle\left|\sum_{1\le i\le N:f^m(x_i)=x}a_i\right|<1.\]\label{cond:IV}
\end{enumerate}
As in the proof of Theorem~\ref{hypdeltas}, it follows from \eqref{cond:I} that for every set $F\subset \{1,\dots,k\}$ and every $1\le j\le k$, we have
\[\left|\sum_{i\in F} a_i-3^{k+j}\right|> 2.\]
Thus, we can deduce from \eqref{cond:I}  and \eqref{cond:III} that for every $1\le j\le k$, 
\[\sum_{i\in ]k,N]:f^m(x_i)=x_j}|a_i|\ge \left|\sum_{i\in ]k,N]:f^m(x_i)=x_j}a_i\right|>1.\]
Hence, we can now conclude from \eqref{cond:II} that for every $1\le j\le k$, there exists $i\in ]k,N]$ such that $f^m(x_i)=x_j$ and $d(x_i,0)<\varepsilon$.\\

It remains to show that for every $1\le j\le k$, there exists $x\in M$ such that $d(x,0)<\varepsilon$  and $d(f^m(x),f^m(x_j))<\varepsilon$.  
Let $1\le j\le k$. 
Again if $d(f^m(x_j),0)<\varepsilon$ then it suffices to consider $x=0$ and we can thus assume from now that $d(f^m(x_j),0)\ge \varepsilon$, i.e. $f^m(x_j)\notin B(0,\varepsilon)$. In particular, it will allow us to use (4) with $x=f^m(x_j)$ when $f^m(x_j)\notin M_k$. We can therefore finish the proof similarly to the proof of Theorem~\ref{hypdeltas} by first remarking that thanks to \eqref{cond:I}, for every non-empty set $F\subset \{1,\dots,k\}$, we have
\begin{equation}\label{eq_coeff2}
    \left|\sum_{i\in F}a_i\right|> 2
\end{equation}
and by considering both possibilities:
\begin{itemize}
\item If $f^m(x_j)\notin M_k$, then we deduce from \eqref{cond:IV} that 
\[\left|\sum_{1\le i\le N:f^m(x_i)=f^m(x_j)}a_i\right|<1.\]
From \eqref{eq_coeff2}, we deduce that
\[\left|\sum_{i\in ]k,N]:f^m(x_i)=f^m(x_j)}a_i\right|\ge 1.\]
Thus, by \eqref{cond:II}, there exists $i\in ]k,N]$ such that $f^m(x_i)=f^m(x_j)$ and $d(x_i,0)<\varepsilon$. Finally, we consider $x=x_i$ in this case.

\item If $f^m(x_j)\in M_k$, then $f^m(x_j)=x_J$ for some $1\le J\le k$ and we use \eqref{cond:III} to get that 
\[\left|\sum_{1\le i\le N:f^m(x_i)=f^m(x_j)}a_i-3^{k+J}\right|<1.\]
We can then deduce from \eqref{cond:I} that
\[\left|\sum_{i\in ]k,N]:f^m(x_i)=f^m(x_j)}a_i\right|\ge 1.\]
Hence, by \eqref{cond:II}, there thus exists $i\in ]k,N]$ such that $f^m(x_i)=f^m(x_j)$ and $d(x_i,0)<\varepsilon$. We consider $x=x_i$ in this case.
\end{itemize}
\end{proof}

With this characterization, we are able to provide a strengthening of \cite[Remark 1.3]{ACP21}.

\begin{corollary}\label{cor:ud-no-HC}
Let $(M,d)$ be a non-trivial uniformly discrete metric space, i.e. $M$ is not a singleton and there exists $\varepsilon>0$ such that $d(x,y) > \varepsilon$ for
every $x\ne y$. Then, there is no $f\in\lip_0(M,M)$ such that $T_f$ is hypercyclic, for any choice of $0\in M$.

\end{corollary}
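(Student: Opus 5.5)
Suppose, towards a contradiction, that $T_f$ is hypercyclic on $\free{M}$ for some $f\in\lip_0(M,M)$ and some base point $0\in M$. Hypercyclicity forces $\free{M}$ to be separable. Since $M$ is uniformly discrete, the deltas of distinct points are $\varepsilon$-separated, because $\|\delta_x-\delta_y\|=d(x,y)>\varepsilon$. Hence $M$ is countable.

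Uniform discreteness also gives asymptotic uniform discreteness trivially: for any $\eta>0$ take $r_\eta=\varepsilon$, so that $d(x,y)<r_\eta$ with $x\neq y$ never happens and the implication holds vacuously. We are therefore in the setting of Theorem~\ref{hypdeltas2}, and hypercyclicity of $T_f$ is equivalent to $f$ having the targeting property around $0$.

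It remains to show that the targeting property fails. Since $M$ is not a singleton, pick a point $y\neq 0$ in $M$ and apply condition (ii) of the targeting property, in the metric form of the Remark, with the finite set $F=\{y\}$ and with tolerance $\eta<\varepsilon$. We obtain $n$ and points $v,v_0$ with
\[
d(v,v_0)<\eta,\qquad d(f^n(v),y)<\eta,\qquad d(f^n(v_0),0)<\eta .
\]
Uniform discreteness turns each of these strict inequalities into an equality of points: $v=v_0$, $f^n(v)=y$ and $f^n(v_0)=0$. Then $y=f^n(v)=f^n(v_0)=0$, which is a contradiction. So $f$ has no targeting property around $0$, and $T_f$ is not hypercyclic.

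The argument is essentially immediate once Theorem~\ref{hypdeltas2} is available. The only points needing care are the countability of $M$, which comes from separability of $\free{M}$ via the separated deltas, and the observation that the base point $0$ is arbitrary: every hypothesis used is independent of that choice.
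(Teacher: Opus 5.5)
Your proof is correct and follows the paper's route: uniform discreteness gives asymptotic uniform discreteness, Theorem~\ref{hypdeltas2} then yields the targeting property around $0$, and that is impossible here. It also fills in the countability of $M$ required by Theorem~\ref{hypdeltas2}, which the paper's proof does not mention and which you correctly get from separability of $\free{M}$ and the $\varepsilon$-separation of the deltas, and your explicit contradiction through condition (ii), where uniform discreteness forces $v=v_0$, is the right way to finish, since the paper's appeal to the general claim that a targeting base point cannot be isolated really needs discreteness of the whole space rather than mere isolation of $0$.
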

\begin{proof}
Assume by contradiction that there exists $0\in M$ and $f\in \lip_0(M,M)$ such that $T_f$ is hypercyclic. Notice that any uniformly discrete metric space is in particular asymptotically uniformly discrete and thus, by Theorem \ref{hypdeltas2}, $f\in\lip_0(M,M)$ has the targeting property around $0\in M$. However, notice that if a map has the targeting property around a fixed point $x_0$ then it is necessary for that point $x_0$ to be not isolated (or $M=\{x_0\}$). This contradicts the fact the $M$ is uniformly discrete.
\end{proof}

We conclude this section by giving an equivalent formulation of the targeting property on the particular class of metric spaces considered here.

\begin{proposition}
Let $(M,d)$ be an asymptotically uniformly discrete metric space with a distinguished point $0$ and $f\in \text{Lip}_0(M,M)$. Then $f$ has the targeting property around $0$ if and only if there exists an increasing sequence $(n_k)_{k\geq1}$ such that for any $x\in M$,
\[ d(f^{-n_k}(\{x\}),0) \xrightarrow[k\to +\infty]{} 0
\quad \text{and }\quad \min\{d(f^{n_k}(x),0), d(f^{-n_k}(\{f^{n_k}(x)\}),0)\}\xrightarrow[k\to +\infty]{} 0.\]
\end{proposition}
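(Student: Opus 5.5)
The plan is to prove both implications with the metric reformulation of Remark (b). We read $d(\emptyset,0)=+\infty$, so the conditions of the statement force the relevant preimage sets to be eventually non-empty. The one geometric input is that in an asymptotically uniformly discrete space every point $x\neq 0$ is isolated. Indeed, taking $\varepsilon<d(x,0)$ gives $B(x,r_\varepsilon)=\{x\}$. More usefully, two distinct points at distance less than $r_\varepsilon$ both lie in $B(0,\varepsilon)$. So two sequences whose mutual distance tends to $0$, one of which stays outside $B(0,\varepsilon)$, must eventually coincide.

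\emph{Sufficiency.} Let $(n_k)$ be as in the statement, and fix $\varepsilon>0$ and a finite $F\subset M$. Choose $k$ so large that, for every $x,y\in F$, $d(f^{-n_k}(\{y\}),0)<\varepsilon$ and $\min\{d(f^{n_k}(x),0),d(f^{-n_k}(\{f^{n_k}(x)\}),0)\}<\varepsilon$, and set $n=n_k$. For (ii), take $v\in f^{-n}(\{y\})$ with $d(v,0)<\varepsilon$ and $v_0=0$. For (i), take $u=x$. If $d(f^n(x),0)<\varepsilon$, put $u_0=0$; otherwise pick $u_0\in f^{-n}(\{f^n(x)\})$ with $d(u_0,0)<\varepsilon$, so that $f^n(u)=f^n(u_0)$. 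This direction uses no discreteness.

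\emph{Necessity.} We work, as in Theorem~\ref{hypdeltas2}, with $M$ countable, or at least separable; we only need this to run the diagonal argument. By Remark (b) we get a dense set $M_0$ and an increasing $(n_k)$ with the sequence versions of (i) and (ii). Every $x\neq 0$ is isolated, so density forces $M\setminus\{0\}\subset M_0$. For $x=0$ both conditions are trivial, since $0\in f^{-n_k}(\{0\})$.

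Fix $y\neq 0$. Let $(y_k),(w_k)$ be as in (ii), and take $\varepsilon<d(y,0)/2$. Since $f^{n_k}(y_k)\to y$ and $y$ is isolated, $f^{n_k}(y_k)=y$ eventually. If $d(y_k,0)\ge\varepsilon$ along a subsequence, then $d(y_k,w_k)\to0$ and the separation principle give $y_k=w_k$ along that subsequence. This yields $y=f^{n_k}(w_k)\to 0$, a contradiction. Since $\varepsilon$ is arbitrarily small, $d(y_k,0)\to0$, so $y_k\in f^{-n_k}(\{y\})$ is a preimage tending to $0$.

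Now fix $x\neq0$ with $(x_k),(z_k)$ as in (i); isolation gives $x_k=x$ eventually. Fix $\varepsilon>0$. Consider the indices $k$ with $d(f^{n_k}(x),0)\ge\varepsilon$. For these, $d(f^{n_k}(x),f^{n_k}(z_k))\to0$ and the separation principle force $f^{n_k}(z_k)=f^{n_k}(x)$ eventually. Then $z_k\in f^{-n_k}(\{f^{n_k}(x)\})$ with $z_k\to0$, so the minimum is small for all large $k$.

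The main obstacle is the bookkeeping in this last separation step. The threshold $r_\varepsilon$ must be chosen relative to the current $\varepsilon$, and the argument has to be arranged so that one sequence $(n_k)$ works simultaneously for all $x$. This is exactly what the diagonal construction of Remark (b) provides, which is why we rely on it.
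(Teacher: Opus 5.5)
Your proof is correct and follows essentially the same route as the paper's. Sufficiency takes $u=x$, $u_0\in\{0\}\cup f^{-n}(\{f^n(x)\})$ close to $0$, $v$ a small preimage of $y$ and $v_0=0$; necessity combines the isolation of nonzero points with the $r_\varepsilon$-separation to force $f^{n_k}(z_k)=f^{n_k}(x)$ whenever $f^{n_k}(x)\notin B(0,\varepsilon)$. Two points in your favour: the paper also passes tacitly through the sequence form of Remark (b), so it relies on the same countability/separability that you state explicitly; and your argument that condition (ii) forces $y_k\to 0$ supplies a reduction the paper only asserts.
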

\begin{proof}
We can already remark that each non-zero point is isolated and therefore, if $N$ is dense in $M$ then $N=M\backslash \{0\}$ or $M$. Moreover, for $x=0$, if we consider $x_k=y_k=z_k=0$, then $\lim_{k\to\infty} x_k=0$,
$\lim_{k\to\infty} z_k=0$, $\lim_{k\to\infty} d(f^{n_k}(z_k),f^{n_k}(x_k))=0$ and  $\lim_{k\to\infty} f^{n_k}(y_k)=0$. In other words, $f$ has the targeting property around $0$ if and only if
\begin{enumerate}
\item[(i)] For each $x\in M\backslash \{0\}$, there exists two sequences $(x_k)_{k\geq1}\in M^{\N}$ and $(z_k)_{k\geq1}\in M^{\N}$ such that $\lim_{k\to\infty} x_k=x$, $\lim_{k\to\infty} z_k=0$ and $\lim_{k\to\infty} d(f^{n_k}(z_k),f^{n_k}(x_k))=0$, 
\item[(ii)] For each $y\in M\backslash \{0\}$, there exists a sequence $(y_k)_{k\geq1}\in M^\N$ such that $\lim_{k\to\infty} y_k=0$ and $\lim_{k\to\infty} f^{n_k}(y_k)=y$. 
\end{enumerate}
Now, since each non-zero point is isolated, we deduce that for every $x\in M\backslash \{0\}$, we have ultimately $x_k=x$ and for every $y\in M\backslash \{0\}$, we have ultimately $f^{n_k}(y_k)=y$.
Therefore, if $f$ has the targeting property around $0$ then 
$\lim_k d(f^{-n_k}(\{y\}),0)=0$ and there exists $(z_k)$ such that for every $\varepsilon>0$, there exists $k_0$ such that for every $k\ge k_0$, we have
\[d(z_k,0)<\varepsilon \quad \text{and}\quad d(f^{n_k}(z_k),f^{n_k}(x))<r_{\varepsilon}.\]
The second inequality implies that either $f^{n_k}(x)=f^{n_k}(z_k)$ or $f^{n_k}(x)\in B(0,\varepsilon)$. In both cases, we get 
\[\min\{d(f^{n_k}(x),0), d(f^{-n_k}(\{f^{n_k}(x)\}),0)\}<\varepsilon.\]
\text{}\\
On the other hand, if there exists an increasing sequence $(n_k)$ such that for any $x\in M$,
\[\lim_{k\to\infty} d(f^{-n_k}(\{x\}),0)=0
\quad \text{and }\quad \lim_{k\to\infty} \min\{d(f^{n_k}(x),0), d(f^{-n_k}\{f^{n_k}(x)\},0)\}=0,\]
then we get the targeting property for $f$ around $0$:
\begin{enumerate}
\item[(i)] for each $x\in M\backslash \{0\}$, if we let $d_k:=\min\{d(f^{n_k}(x),0), d(f^{-n_k}(\{f^{n_k}(x)\}),0)\}$, we can pick $x_k:=x$ and 
\begin{equation*}
    z_k:= \begin{cases}
       0 & \text{ if } d_k=d(f^{n_k}(x),0); \\
       z_k\in f^{-n_k}(\{f^{n_k}(x)\}) \cap  B(0,2d_k) & \text{ otherwise.}
    \end{cases}
\end{equation*}
 As $\lim_{k\to\infty} d_k=0$, we deduce that $z_k\xrightarrow{k\to \infty}0$. Also,
 \[d(f^{n_k}(z_k),f^{n_k}(x_k))= d(f^{n_k}(z_k),f^{n_k}(x)) = 
 \begin{cases}
       d_k & \text{ if } z_k=0; \\
      0 & \text{ otherwise}.
 \end{cases}\]
\item[(ii)] for each $y\in M\backslash \{0\}$, as $\lim_{k\to\infty} d(f^{-n_k}(\{y\}),0)=0$, we can pick $y_k\in f^{-n_k}(\{y\})$ such that $y_k\xrightarrow[]{k\to\infty} 0$.
\end{enumerate}
This finishes the proof.
\end{proof}

\section{Conclusions}

We observed that Lipschitz-free operators constitute a universal model among linearizations of Lipschitz maps with a fixed point on metric spaces. 
We introduced new criteria to ensure the (weak) mixing property for a linear operator by achieving concrete conditions on a (not necessarily linear) restriction of it. As a byproduct, we have been able to extend the study of Lipschitz-free operators within the dynamical point of view, obtaining significant improvements on previous results within the literature. In particular, we obtained that, for the introduced class of asymptotically uniformly metric spaces, the weak mixing property and hypercyclicity of $T_f$ can be characterized by the targeting property of $f$ (Theorem \ref{hypdeltas2}). We recall (see \cite{ACP21}) that it is not  known if there exist Lipschitz-free operators being hypercyclic and not weakly mixing. 
The results in \cite{ACP21}, and our Theorem \ref{hypdeltas2}, point to a negative answer. Moreover, in the case that the underlying metric space $M$ is bounded, it is readily seen that for any $T_f$ operator, the set $\sp(\dd(M))$ will be a dense set of points with bounded orbit; then, as a straightforward application of \cite[Theorem 2.48]{GEP11} any hypercyclic $T_f$ operator on a Lipschitz-free space of a bounded metric space  $M$ will be weakly mixing\footnote{The first author thanks Colin Petitjean for noticing him about this observation.}. 
We also recall that finding an hypercyclic operator that is not weakly-mixing is not an easy task \cite{RosaRead}. Another property extremely difficult to achieve is the absence of non-trivial invariant closed subsets. It is well-known that such operators exist on $\ell^1$ \cite{Read}. However, we notice that every $T_f$ always contains $\dd(M)$ as a non-trivial closed invariant set.\\

The targeting property  that we have introduced in this paper allows us to characterize hypercyclic Lipschitz-free operators on $\free{M}$ for different kinds of countable metric spaces $M$ where each non-zero point is isolated. Our results do not cover all these spaces, although they provide some hope for a characterization of hypercyclic Lipschitz-free operators $T_f$ in terms of the map $f$ in this context. 

\begin{problem}
Let $(M,d)$ be a countable metric space with a distinguished point $0$ where each non-zero point is isolated. How to characterize the maps $f\in \lip_0(M,M)$ for which the Lipschitz-free operator $T_f$ is hypercyclic? 
\end{problem}

\section*{AI Disclosure Statement}
The authors declare that no generative AI models were used at any stage in the elaboration of this manuscript.

\section*{Acknowledgements}
This work was partially developed while the first author did research stays at Université de Mons (2024) and Université de Lille (2025). The first author gratefully acknowledges the warm hospitality at both institutions as well as their departments and research teams during his visit. The first author was partially supported by Generalitat Valenciana (through the predoctoral fellowship CIACIF/2021/378, and the research stay grant CIBEFP/2023/20), by MICIU/AEI/10.13039/
501100011033/FEDER, UE (through Projects PID2021-122126NB-C33 and  PID2022-139449NB-I00), and by the Fundació Ferran Sunyer i Balaguer (through Borsa de Viatge Ferran Sunyer i Balaguer 2025).

The second author was partially supported by the French ANR project No. ANR-20-CE40-0006.

The third author was partially supported by the F.R.S-FNRS through funding PDR No. T.0285.26.

Part of the research was done while the fourth author was invited by Université du Littoral Côte d’Opale. He acknowledges the hospitality and support. The fourth author was partially supported by MICIU/AEI/10.13039/501100011033/FEDER, UE, Project  PID2022-139449NB-I00.

\printbibliography

\end{document}